\theoremstyle{plain}
\newtheorem{theorem}{Theorem}[section]
\newtheorem{lemma}[theorem]{Lemma}
\newtheorem{lemma-def}[theorem]{Lemma-Definition}
\newtheorem{proposition}[theorem]{Proposition}
\newtheorem{corol}[theorem]{Corollary}
\theoremstyle{definition}
\newtheorem{definition}[theorem]{Definition}
\newcommand{\Q}{\mathbb{Q}}
\newcommand{\F}{\mathbb{F}}
\newcommand{\FF}{\F}
\newcommand{\Z}{\mathbb{Z}}
\newcommand{\ZZ}{\Z}
\newcommand{\N}{\mathbb{N}}
\newcommand{\PP}{\mathbb{P}}
\newcommand{\Lm}{\mathcal{L}}
\newcommand{\ord}{\mathrm{ord}\,}
\newcommand{\Lring}{\Lm_{\text{ring}}}
\newcommand{\Laff}{\Lm_{\text{aff}}}
\newcommand{\Laffinf}{\Laff^{\pi}}
\newcommand{\LaffinfQE}{\widehat{\Laffinf}}
\newcommand{\acm}{\mathrm{ac}_{\pi^m}\,}
\newcommand{\mathand}{\quad \text{and} \quad}
\newcommand{\rv}{\mathrm{rv}}
\title{Cell decomposition for semi-affine structures on $p$-adic fields}
\author{Eva Leenknegt}
\address{Department of mathematics \\ Purdue University \\ \newline150 N. University Street \\ West- Lafayette, IN 47907 \\ USA}
\email{eleenkne@math.purdue.edu}
\urladdr{http://www.math.purdue.edu/~eleenkne}
\date{}
\begin{document}

\begin{abstract}
We use cell decomposition techniques to study additive reducts of $p$-adic fields. We consider a very general class of fields, including fields with infinite residue fields, which we study using a multi-sorted language. The results are used to obtain cell decomposition results for the case of finite residue fields. We do not require fields to be Henselian, and we allow them to be of any characteristic.\end{abstract}
\maketitle
\section{Introduction}
It is hard to overstate the importance of cell decomposition techniques for the study of $o$-minimal structures. The technique made it possible to obtain results for a wide array of topics, ranging from the study of definable invariants to differentiability of definable functions, see eg. van den Dries \cite{vdd-98} for details. 

Another example is the classification of reducts of $(R,+,\cdot, <)$ by Peterzil \cite{mpp-92,pet-93,pet-92} and others. One of the most striking results he obtained  is the fact that there exists only a single structure between the structure of semi-algebraic sets and the semi-linear sets of $(R,+,\{\lambda_a\}_{a \in R})$: a structure where multiplication is definable only on a bounded interval.
The question whether a similar result would exist in the $p$-adic context was one of the motivations for this paper: a good understanding of semi-affine structures is a necessary first step
towards answering this question. 
In the upcoming papers \cite{lee-2012.1, lee-2012.2} we will report our findings.
 
For $p$-adic structures, a number of cell decomposition results do exist. Probably the most well-known is the cell decomposition result for semi-algebraic sets by Denef \cite{denef-86}, which allowed him to give a new proof of Macintyre's quantifier elimination result \cite{mac-76}, and which has been a very useful tool in the study of $p$-adic integrals , see eg Denef \cite{denef-86} or Cluckers and the author \cite{clu-lee-2008}. 
Haskell and Macpherson \cite{has-mac-97} developed $P$-minimality as a $p$-adic alternative to $o$-minimality, to study (expansions) of  $p$-adically closed fields. It was shown later by Mourgues \cite{mou-09} that such structures admit cell decomposition (using Mourgues definition of cells) if and only if they have definable Skolem functions.

Most existing $p$-adic cell decomposition results focus on (expansions of) the semi-algebraic structure. This poses a complication for obtaining $p$-adic equivalents of Peterzil's result, because  there does not really exist a minimality theory for weak $p$-adic structures.
In a previous paper \cite{clu-lee-2011} we proposed to consider all structures $(K,\Lm)$, where $K$ is a $p$-adic field and the $\Lm$-definable subsets of $K$ are the same as the $\Lring$-definable subsets of $K$. This is a direct $p$-adic equivalent of $o$-minimal reducts of $(R,+,\cdot, <)$. Unfortunately, we were unable to prove whether such structures would always admit some form of cell decomposition. We gave a few suggestions in \cite{lee-2011b}, but it seems to be rather difficult even to suggest a useful general notion of cells, so a general cell decomposition theorem for such structures still seems inaccessible.

A natural first step is to study the properties of individual structures. In \cite{lee-2011b} we consider some very weak structures (where even addition is not definable everywhere), and in this paper we will look at the $p$-adic equivalent of semi-linear sets. Some time ago, Liu \cite{liu-94}  obtained a cell decomposition for the semi-linear structure $(\Q_p; +,-, \{\overline{c}\}_{c\in \Q_p}, \{P_n\}_n)$,  where $\overline{c}$ is a symbol for scalar multiplication $x \mapsto cx$, and the $P_n$ are the nonzero $n$-th powers. 
This paper describes similar structures, but in a more general context.

We will consider structures $(K,\Lm)$, where $\Lm$ is a semi-affine language and where $K$ is a $\Z$-field: a valued field that satisfies the following extra conditions. Write  $\Gamma_K$ for the value group,  and let  $R_K$ be the valuation ring of $K$.
\begin{definition}
A $\Z$-field is a valued field $K$ that contains an element $\pi$ of minimal positive valuation.  Further, we require that $\Gamma_K$ is a $\Z$-group (that is, $\Gamma_K/\Z$ is divisible) and that there exist angular component maps $\acm: K\to R_K/\pi^mR_K$.
\end{definition}
We will assume that the valuation is normalized such that $\ord  \pi =1$. 
The required
angular component maps always exist  if $\Gamma_K = \Z$ and $K$ has a uniformizing element $\pi$. The proof is similar to the proof of Lemma  \ref{lemma:acm}.
Note that we do \emph{not} put any conditions on the residue field $\F_K$ and the characteristic of $K$. Moreover, we do not require the field $K$ to be Henselian.

When studying structures on valued fields, often multi-sorted languages are considered, typically consisting of a field sort and various other, auxiliary sorts used to encode information concerning the residue field and angular components.   See for example Pas \cite{pas-89,pas-90}, who used multi-sorted languages to study semi-algebraic structures for fields with infinite residue fields. This approach was extended to fields with analytic structure by Cluckers, Lipshitz and Robinson \cite{clr-06}.  Other recent examples include Scanlon, who used a multi-sorted language to study valued $D$-fields \cite{sca-03},  and  Cluckers and Loeser \cite{clu-loe-07}, who obtain cell decomposition for henselian valued fields of characteristic zero.

Most of the examples given above are essentially multi-sorted versions of (extensions of) the language of valued fields. We present a multi-sorted  language where full multiplication is not definable,  but such that `multiplicative' relations like the valuation of $x$ modulo $n$ are still definable.  (This relation is equivalent to $x$ being in certain cosets of the set of $n$th powers.)
 The valued field $K$ will be the main sort, equipped with the language $(+, \cdot_{\pi}, |)$.
The function $\cdot_{\pi}$ is defined as \[\cdot_{\pi}:K\to K: x \mapsto \pi x.\]
The divisibility relation $|$ is defined as  $x \mid y$ iff $\ord x \leqslant \ord y$.
 
The auxiliary sorts $\Lambda_{n,m}$ are constructed as follows.
Since $\Gamma_K$ is a $\Z$-group, there exist maps
\(\gamma_n: K^{\times} \to \{0,\, \ldots,\, n-1\},\)
where  $\gamma_n(x)$ is the remainder of $\ord x$ after division by $n$. 
 For every $x \in K^{\times}$, put 
\[\rho_{n,m}(x) = \pi^{\gamma_n(x)}\acm(x).\]
Extend this to $K$ by putting $\rho_{n,m}(0) = 0$. 
Our auxiliary sorts will then be the sets of equivalence classes:
%
\[\Lambda_{n,m}:=\{\rho_{n,m}(x)\ | \ x \in K\}.\]

 The maps $\rho_{n,m}$ project the main sort $K$ onto the auxiliary sorts $\Lambda_{n,m}$.  The language on the auxiliary sorts contains no symbols. Schematically, this gives us the following  language $\Laff^{\pi}$:
\[\xymatrix{\ar[d]^{\rho_{n,m}}K& \hspace{-50pt}(+, \cdot_{\pi}, | ) \\
\{\Lambda_{n,m}\}_{n,m}&\hspace{-25pt}}\]

Note that this language does not use the value group as a seperate sort. However, the sets $\Lambda_{n,m}$ retain information on the value group, modulo an integer $n$.
Let us give some examples of relations that are definable in this language.

\begin{lemma}\label{lemma:defvb} Let $K$ be a $\Z$-field.
For every $k, n \in \N$, the following subsets of $K$ are  $\Laff^{\pi}$-definable:
\begin{enumerate}
\item $\{ (x,y) \in K^2 \mid \ord x = \ord y \}$
\item $\{ x\in K \mid \ord x \equiv k \mod n \}$
\end{enumerate}
\end{lemma}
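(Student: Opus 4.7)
Part (1) is immediate from the definition of the divisibility relation: $\ord x = \ord y$ is equivalent to $(x \mid y) \wedge (y \mid x)$, a quantifier-free $\Laffinf$-formula.

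For part (2), my plan is to reduce to the case $0 \leq k < n$ (the condition only depends on the residue of $k$ modulo $n$) and then to extract the congruence class of $\ord x$ from the projection $\rho_{n,1}(x) \in \Lambda_{n,1}$. The proposed defining formula is
\[
\varphi_k(x) \; : \; \exists u \in K \, \bigl( \ord u = 0 \; \wedge \; \rho_{n,1}(x) = \rho_{n,1}(\pi^k u) \bigr),
\]
where $\pi^k u$ abbreviates the $k$-fold iterate of $\cdot_\pi$ applied to $u$ (a legitimate $\Laffinf$-term since $k$ is a fixed natural number), and $\ord u = 0$ is itself expressible in $\Laffinf$ with $\pi$ as a parameter, for instance as $\cdot_\pi(u) \mid \pi \wedge \pi \mid \cdot_\pi(u)$.

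To verify correctness I plan to check both directions using the identity $\rho_{n,1}(\pi^k u) = \pi^k \ac(u)$, valid for $0 \leq k < n$ and $\ord u = 0$ (immediate from $\gamma_n(\pi^k u) = k$ together with $\ac(\pi^k u) = \ac(u)$). Matching this against $\rho_{n,1}(x) = \pi^{\gamma_n(x)} \ac(x)$ for $x \neq 0$: given $x$ with $\gamma_n(x) = k$, a witness $u$ is obtained by choosing any unit with $\ac(u) = \ac(x)$ (possible by surjectivity of the angular component map); conversely, if $\varphi_k(x)$ holds then matching the two expressions forces $\gamma_n(x) = k$. The step that genuinely needs attention — and the only place I foresee nontrivial work — is verifying inside $\Lambda_{n,1}$ that the representation $\pi^i a$ with $0 \leq i < n$ and $a \in \F_K^\times$ is essentially unique; this follows from unpacking the definition of $\Lambda_{n,m}$ as the image of $\rho_{n,m}$, but it is the one non-formal ingredient. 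A minor bookkeeping point: $\varphi_k$ excludes $x = 0$ (since $\rho_{n,1}(\pi^k u) \neq 0$ for a unit $u$ while $\rho_{n,1}(0) = 0$), which is consistent with $\ord 0$ not being a finite integer; adjoining a disjunct $x = 0$ covers the opposite convention.
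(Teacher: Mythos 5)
Your proof is correct and follows essentially the same route as the paper's: part (2) is handled in both cases by existentially quantifying over a $K$-witness of prescribed order lying in the same $\rho_{n,m}$-class as $x$ (the paper phrases this by first defining the relation $\ord \lambda \equiv k \bmod n$ on the auxiliary sort and then pulling it back, which collapses to your formula once the quantifiers are combined). The only cosmetic difference is in part (1), where the paper uses $x \mid y \wedge \neg(\pi x \mid y)$ in place of your $x \mid y \wedge y \mid x$; both are valid quantifier-free definitions.
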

\begin{proof}
The relation $\ord x = \ord y$ is equivalent with $x \mid y \wedge \neg (\pi x \mid y)$. We can use this to express that $\ord x =  \gamma$ for any $\gamma \in \gamma_K$ by substituting $y$ for a suitable constant from $K$. For $\lambda \in \Lambda_{n,m}$ we can then express that $\ord \lambda \equiv k \mod n$ in the following way:
\[ \ord \lambda \equiv k \mod n \leftrightarrow (\exists x \in K)[\rho_{n,m}(x) = \lambda \text{ \ and \ } \ord x = k].\]
We can now use the formula 
\[(\exists \lambda \in \Lambda_{n,m})[(\ord \lambda \equiv k \hspace{-2pt}\mod n) \text{\ \ and \ } \rho_{n,m}(x) = \lambda].\]
to define the set consisting of all $x \in K$ such that $\ord x \equiv k \mod n$. 
\end{proof}
 If $K$ is a $\Z$-field with infinite residuefield, 
 the set $\{x \in K \mid \ord x \equiv k \mod n\}$ cannot be defined in $\Laff^{\pi}$ without using $K$-quantifiers. 
 To remedy this, 
we  expand the language in Section \ref{subsec:langdef}, thus obtaining an additive variant of the language studied by Pas. 
In Section \ref{subsec:celdec} and \ref{subsec:defsetfun},
 we show that  $\Z$-fields
 admit elimination of $K$-quantifiers in this extended language $\LaffinfQE$. The proof  uses  cell decomposition techniques. We also give a characterization of the definable functions $f: K^n \to K^m$. 

In Section \ref{sec:finres}, we restrict our attention to fields with finite residue field. For such fields, we can `collapse' the multisorted language to a language with just one sort, and derive cell decomposition and quantifier elimination from the results we obtained for the multisorted language. 

To make the distinction between mono- and multisorted languages clear, we will use the following terminology. The definable sets of our multi-sorted language are called `semi-additive' sets. We will refer to the mono-sorted languages we deduce from this as  `semi-affine' languages. `Semi-linear' sets are the definable sets of the structure Liu studied on $\Q_p$. We will compare our results for semi-affine sets with  existing results for semi-linear and semi-algebraic sets. In particular, we give a characterization of definable functions in Section \ref{subsec:skol}, and in Section \ref{subsec:clas}, we give some examples to show that classification by definable bijection is not quite as simple as it is for semi-algebraic sets. (It was shown by Cluckers \cite{clu-2000} that any two infinite $p$-adic semi-algebraic sets are isomorphic if and only if they have the same dimension.)
 
\section{Affine structures with infinite residue field}

\subsection{Definition of the languages $\Laffinf$ and $\LaffinfQE$} \label{subsec:langdef}
Let $K$ be a valued field with value group $\Gamma_K$ and valuation ring $R_K$. Let $\pi$ be an element $\pi$ of minimal positive valuation, such that $\ord \pi =1$. We use the notation $\acm$ for the angular component maps $\acm: K\to R_K/\pi^mR_K$.

The only symbol for multiplication we included in $\Laff^{\pi}$ is $\cdot_{\pi}$. However, as addition is definable,  scalar multiplication by every $n \in \N$ is definable. This implies that if $K$ has characteristic zero, multiplication by every $c \in \Q(\pi)$ is definable. If char($K$)=$p$, we can define scalar multiplication for every $c \in \F_p(\pi)$. \\In general, if we denote the prime field of $K$ by $\PP_K$, we can thus define a scalar multiplication map $\overline{c} : K\to K : x\mapsto cx$ for every $c \in \PP_K$.

We added the symbol $\cdot_{\pi}$ because of the functions it induces on the auxiliary sets. We do not  include symbols for scalar multiplication by other constants, as we want to keep the language as basic as possible. However, it is possible to define variations on $\Laff^{\pi}$ that contain a wider range of symbols for scalar multiplication. It is easy to see that such languages can be studied in a similar way as $\Laff^{\pi}$. In fact, we refer to these  related languages when we consider the case of finite residue fields.

The addition map for the main sort $K$ induces addition functions $+_r^{(n,m)}$ on the auxiliary sorts $\Lambda_{n,m}$,
%
where $r\in \N$ is such that $rn<m$. 
If $rn <=\ord \frac{y}{x}<(r+1)n$ (and some additional conditions if $r=0$), these functions are designed to satisfy the relation
\[\rho_{n,m}(x+y) = \rho_{n,m}(x) +_r^{n,m} \rho_{n,m}(y).\] 

Why do we need to consider multiple addition functions on the auxiliary sorts? To see this, let us compare with a similar construction in a different language.
In \cite{fle-2011}, Flenner considers a language with auxiliary sorts \[RV_{\gamma} := K^{\times} \backslash (1+ M_{\gamma}),\] where $M_{\gamma} = \{ x\in R_K \mid \ord x > \gamma\}$. The quotient map, which is denoted $\rv_{\gamma}:= K^{\times} \mapsto RV_{\gamma}$, induces  an addition function $\oplus_{\gamma}$ on each sort $RV_{\gamma}$, that is compatible with the addition in the main sort, in the sense that
\[\rv_{\gamma}(x+y) = \rv_{\gamma}(x) \oplus_{\gamma} \rv_{\gamma}(y),\] for all $x,y$ for which $\ord (x+y) = \min\{\ord x, \ord y \}.$

If this condition is not satisfied, the operation $\oplus$ is not welldefined, since then $\rv_{\gamma}(x+y)$ depends on the representatives $x$ and $y$, and not only on $\rv_{\gamma}(x)$ and $\rv_{\gamma}(y)$. To define the value of  $\lambda_1 \oplus_{\gamma} \lambda_2$ for $\lambda_1, \lambda_2 \in RV_{\gamma}$, 
one chooses representatives $x_i$ such that $\rv_{\gamma}(x_i) = \lambda_i$, and then puts $\lambda_1 \oplus_{\gamma} \lambda_2 := \rv_{\gamma}(x_1 + x_2).$ If $\ord (x_1+x_2) = \min\{\ord x_1, \ord x_2 \},$ this value does not depend on the chosen representatives, so this addition is well-defined. 

 The main difference between the sorts $RV_{\gamma}$ and the sorts $\Lambda_{n,m}$ is that $\rv_{\gamma}(x)$ remembers the order of $x$, while $\rho_{n,m}(x)$ only retains the order modulo $n$. 
 Hence we will have to be more careful, since every equivalence class in $\Lambda_{n,m}$ contains representatives with different orders. 
 Let $\lambda, \lambda' \in \Lambda_{n,m}$ and suppose that we want to define $\lambda\oplus \lambda'$. The outcome will depend on the distance of the chosen representatives, by the followning lemma:
  \begin{lemma}\label{lemma:rhoab}
Put $\delta \in \{-1,1\}$. Suppose that $\rho_{n,m}(a) = \lambda$ and $\rho_{n,m}(b) = \mu$, then $\rho_{n,m}(a+\delta b)$ equals
\[ \left\{\begin{array}{lcl}
\hspace{-4pt}\lambda &\text{if}& m + \ord a \leqslant \ord b, \\
\hspace{-4pt} \rho_{n,m}(\lambda +\delta \mu \pi^{rn}), \text{with}  \ rn = \ord (\frac{\mu a}{\lambda b})& \text{if} & -m +\ord b <\ord a <\ord b,\\
\hspace{-4pt} \rho_{n,m}(\lambda + \delta \mu)&\text {if}& \ord a = \ord b = \ord (a + \delta b).
\end{array}\right.\]
\end{lemma}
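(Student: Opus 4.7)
The plan is to prove each of the three cases by direct computation from the definitions $\gamma_n(x) = \ord x \bmod n$ and $\acm(x) \equiv x\pi^{-\ord x} \bmod \pi^m$, together with the non-Archimedean triangle inequality for $\ord$.

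Cases 1 and 3 should be quick. In Case 1, the hypothesis $m + \ord a \leqslant \ord b$ forces $\ord(a + \delta b) = \ord a$ by ultrametricity, hence $\gamma_n(a + \delta b) = \gamma_n(a)$. Writing $a + \delta b = a(1 + \delta b/a)$ with $\ord(\delta b/a) \geqslant m$, the factor $1 + \delta b/a$ is trivial modulo $\pi^m$, so $\acm(a + \delta b) \equiv \acm(a) \bmod \pi^m$ and $\rho_{n,m}(a + \delta b) = \lambda$. In Case 3 the orders of $a$ and $b$ coincide, so $\gamma_n$ is preserved, and dividing by $\pi^{\ord a}$ yields $\acm(a + \delta b) \equiv \acm(a) + \delta \acm(b) \bmod \pi^m$. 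The resulting expression $\pi^{\gamma_n(a)}(\acm(a) + \delta \acm(b))$ is exactly $\lambda + \delta \mu$ as a $K$-element, which under the Case 3 hypothesis has order $\gamma_n(a)$, so $\rho_{n,m}$ returns it essentially unchanged, giving $\rho_{n,m}(\lambda + \delta \mu)$.

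The main effort lies in Case 2, where $-m + \ord b < \ord a < \ord b$. Once more $\ord(a + \delta b) = \ord a$, so $\gamma_n(a + \delta b) = \gamma_n(a)$, but now a direct expansion yields
\[\acm(a + \delta b) \equiv \acm(a) + \delta \pi^{\ord b - \ord a}\acm(b) \bmod \pi^m,\]
so $\rho_{n,m}(a + \delta b) = \pi^{\gamma_n(a)}\acm(a) + \delta \pi^{\gamma_n(a) + \ord b - \ord a}\acm(b)$ modulo $\pi^{m + \gamma_n(a)}$. To finish, I would verify that the prescribed $rn = \ord(\mu a/(\lambda b))$ is precisely the exponent needed so that $\lambda + \delta \mu \pi^{rn}$, viewed as an element of $K$, has the same image under $\rho_{n,m}$; here I use $\ord \lambda = \gamma_n(a)$ and $\ord \mu = \gamma_n(b)$, so that $\gamma_n(b) + rn$ lines up with $\gamma_n(a) + \ord b - \ord a$ up to the shift needed to recover the formula above.

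The main obstacle in Case 2 is the subcase bookkeeping: depending on the sign of $r$ and on the relative sizes of $\gamma_n(a)$ and $\gamma_n(b)$, either $\lambda$ or $\delta \mu \pi^{rn}$ will dominate the valuation of the sum. One must therefore split into a handful of subcases and check separately in each that applying $\rho_{n,m}$ recovers the angular-component expression computed above, modulo $\pi^{m + \gamma_n(a)}$.
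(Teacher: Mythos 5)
The paper offers no proof to compare against (the lemma is left as an exercise), so I can only judge your argument on its own terms. Cases 1 and 3 are sound: ultrametricity fixes $\gamma_n(a+\delta b)=\gamma_n(a)$, and the identities $\acm(a+\delta b)=\acm(a)$ (resp.\ $\acm(a)+\delta\acm(b)$) follow from multiplicativity of $\acm$ applied to $a(1+\delta b/a)$ together with $\acm(u)\equiv u\bmod \pi^m$ for units; your observation that $\lambda+\delta\mu$ has order $\gamma_n(a)$ in Case 3 is exactly what makes $\rho_{n,m}$ return it unchanged. Two points you should make explicit: first, $\ord b-\ord a$ lies strictly between $0$ and $m$ and is therefore an integer because $\Z$ is convex in $\Gamma_K$ (your working definition $\acm(x)\equiv x\pi^{-\ord x}$ does not literally make sense for non-integer orders in a general $\Z$-field, but all you actually use is multiplicativity, so this is repairable); second, evaluating $\rho_{n,m}(\lambda+\delta\mu\pi^{rn})$ requires choosing lifts of $\lambda,\mu$ to $K$, and you should check independence of the lifts (it holds, since perturbing a lift by something of order at least $\gamma_n(\cdot)+m$ does not move the class of the sum).

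The genuine gap is the deferred verification in Case 2. Your computation of the left-hand side is correct: $\gamma_n(a+\delta b)=\gamma_n(a)$ and $\acm(a+\delta b)=\acm(a)+\delta\pi^{s}\acm(b)$ with $s=\ord b-\ord a$. Your alignment condition is also correct: writing $\lambda=\pi^{\gamma_n(a)}A$ and $\mu=\pi^{\gamma_n(b)}B$, you need $\gamma_n(b)+rn=\gamma_n(a)+s$, i.e.\ $rn=(\ord b-\gamma_n(b))-(\ord a-\gamma_n(a))=\ord \frac{\lambda b}{\mu a}$. But $\ord \frac{\mu a}{\lambda b}$ is the \emph{negative} of this, so the exponent prescribed in the statement does not satisfy your own alignment condition; the phrase ``up to the shift needed to recover the formula above'' is precisely where the check fails. (Concretely, take $n=2$, $\ord a=1$, $\ord b=2$: you need $rn=2$, while $\ord\frac{\mu a}{\lambda b}=-2$, and $\lambda+\delta\mu\pi^{-2}$ has order $-2$, giving a completely different class.) You must either conclude that the statement carries a sign slip and should read $rn=\ord\frac{\lambda b}{\mu a}$ --- which is consistent with the paper's convention for $+_r$, where the representative of the second argument is the one multiplied by $\pi^{rn}$ with $rn\geqslant 0$ --- or accept that the verification cannot be completed as written. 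Once the correct exponent is in place, the subcase analysis you anticipate evaporates: $rn$ is a multiple of $n$ exceeding $-n$, hence $rn\geqslant 0$; and $\ord(\delta\mu\pi^{rn})=\gamma_n(a)+s>\gamma_n(a)=\ord\lambda$, so $\lambda$ always dominates, $\gamma_n(\lambda+\delta\mu\pi^{rn})=\gamma_n(a)$, and the angular components match with no further splitting.
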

\begin{proof}Left as an exercise.
\end{proof}
If we want to define addition maps, we will have to take these different possibilities into account. Not that this is a bad thing: this means that we can use the auxiliary sorts $\Lambda_{n,m}$ to encode information about the distance between elements of $K$. This will be important when we want to achieve cell decomposition. 

Let us now give a precise definition of the addition maps $+^r_{n,m}$.
 If $r \geqslant 1$, let $\lambda +^{(n,m)}_r \lambda'$  be the unique value $\rho \in \Lambda_{n,m}$ such that 
\[(\exists x,y \in K)\left[\begin{array}{cl}&[\rho_{n,m}(x)=\lambda] \wedge [\rho_{n,m}(y)= \lambda'] \\  \wedge & [0 \leqslant \ord x <n]
\wedge [0 \leqslant \ord y <n] \\ \wedge & \rho_{n,m}(x+\pi^{rn}y) = \rho \end{array}\right]\] 
(The above formula cannot be used if $\lambda=0$ or $\lambda'=0$. We can extend the definition to these cases by putting $0 +^{(n,m)}_r \lambda = \lambda +^{(n,m)}_r 0 =\lambda$.)

 If $r=0$, the operation above does not always yield a unique result. For this reason, we will restrict the domain to
$D_+ \hspace{-2pt}:=\hspace{-1pt} \{(\lambda, \lambda') \in \Lambda_{n,m}^2 \ | \ \phi(\lambda,\lambda')]\}$,
where $\phi(\lambda,\lambda')$ is  the formula
\[(\forall x, y \in K)\left[[\rho_{n,m}(x) = \lambda \wedge \rho_{n,m}(y) = \lambda' \ \wedge \ord x =\ord y]\Rightarrow\ord (x+y) = \ord x\right].\]
For $(\lambda, \lambda') \in D_+$, define $\lambda +^{(n,m)}_0 \lambda'$ by the same formula as for $r\geqslant 1$; \ put   $\lambda +^{(n,m)}_0 \lambda':=0$ if $(\lambda,\lambda') \notin D_+$.
Analogously, we can define functions $ -^{(n,m)}_r $. If the domain is clear from the context, we will simply write $+_r$ or $-_r$.


We are now ready to introduce the language $\LaffinfQE$, which is a definitional expansion of $\Laffinf$, obtained by adding symbols for the functions we discussed above, and symbols for the relation
\[\equiv_{n,k}(\lambda) \leftrightarrow \ord \lambda \equiv k \mod n,\]
which we showed to be definable in the proof of Lemma \ref{lemma:defvb}.
Schematically, this gives us the following language:
\[\xymatrix{\ar[d]^{\rho_{n,m}}K& \hspace{-160pt}(+, \overline{c}_{c\in \PP_K(\pi)}, | ) \\
\{\Lambda_{n,m}\}_{n,m}&\hspace{-20pt}
(\{+_r^{(n,m)}\}_{r \in \N},\{-_r^{(n,m)}\}_{r\in \N},\{\equiv_{n,k}\}_{k\in\N})}\]
We will show that $\Z$-fields admit quantifier elimination and cell decomposition in this language.

Remark: The same notation $\rho_{n,m}$ will also be used for the natural projection maps
\[\rho_{n,m}:\Lambda_{kn,m'}\to \Lambda_{n,m},\]
with $k \in \N\backslash\{0\}$, $m' \geqslant m$. These maps are clearly definable in our original language. We assume that our extended language contains symbols for these maps. These projection maps are `compatible' with the functions we defined on the $\Lambda_{n,m}$: for example for the addition maps we have
\[\rho_{n,m}(\lambda +_r^{(kn,m')} \lambda') = \rho_{n,m}(\lambda) +_r^{(n,m)} \rho_{n,m}(\lambda').\] 

\subsection{Subsets of $K^k$ definable without $K$-quantifiers in $\LaffinfQE$} \label{subsec:form_of_kqf-formula}
In this section we will give a short description of $K$-quantifier-free definable subsets of $K^k$. Let $\phi(x)$ be a formula without $K$-quantifiers, and such that all free variables $x= (x_1, \ldots, x_k)$ are $K$-variables. We use the following notation.
\begin{itemize}
\item Let $g_{i,n,m}(\lambda_1, \ldots, \lambda_r)$ denote a term in the $\Lambda_{n,m}$-sort. 
\item Let $f_i(x)$ denote a linear polynomial in the $K$-variables $x$ with coefficients in $\PP_K(\pi)$ and constant term in $K$. We call this a $(\PP_K(\pi),K)$-linear polynomial.
\item Let $\theta_{i,k',n,m}(\lambda_1, \ldots, \lambda_{k'})$ be a formula in the $\Lambda_{n,m}$-sort with $k'$ free variables.
\end{itemize}
With this notation, $\phi(x)$ is a boolean combination of expressions $\phi_{i,n,m}(x)$ and $\psi_{i,j}(x)$:
\begin{enumerate}
\item[(a)] Put $\mu_{j}(x) := g_{j,n,m}(\rho_{n,m}(f_{j,1}(x)), \ldots, \rho_{n,m}(f_{j,r}(x)))$, then $\phi_{i,n,m}(x)$
 is a formula of the form $\phi_{i,n,m}(x) \leftrightarrow \theta_{i,k',n,m} (\mu_1(x),\ldots, \mu_{k'}(x)))$.
\item[(b)] $\psi_{i,j}(x) \leftrightarrow \ord f_i(x) \ \square \ \ord f_j(x)$, where $\square$ may denote $<,\leqslant, >,\geqslant,=$.
\end{enumerate}
Note that we may assume that the same value of $n$ and $m$ occurs in every expression of type $\phi_{i,n,m}$. (Indeed, expressions $\phi_{i,n,m}$ and $\phi_{j,n',m'}$, can (with the help of projection maps $\rho_{n,m}$ and $\rho_{n',m'}$) be rewritten to expressions $\phi_{i,n'',m''}, \phi_{j,n'',m''}$, where $n'' = \text{lcm}\{n,n'\}$, and $m'' = \max\{m,m'\}$.) \\
Also, since any negation of an expression of type (a) or (b) can again be rewritten as an expression of the same form, $\phi(x)$ can be obtained by taking (a finite number of) conjunctions and disjunctions of such expressions. 
Furthermore, note that any expression of type (a) is equivalent with
\[(\exists \lambda_{ij} \in \Lambda_{n,m})\left[\left(\bigwedge_{i,j} \rho_{n,m}(f_{i,j}(x)) = \lambda_{i,j}\right) \wedge \psi(\lambda_{11}, \ldots, \lambda_{k'r})\right]\]
where the formula $\psi$ is defined as \[\psi(\lambda_{11}, \ldots, \lambda_{k'r}) \leftrightarrow \theta_{i,k',n,m}(g_{1,n,m}(\lambda_{11}, \ldots, \lambda_{1r}), \ldots, g_{k',n,m}(\lambda_{k'1}, \ldots, \lambda_{k'r})).\]
 It follows then immediately that $\phi(x)$ is in fact a disjunction of expressions of the form
\begin{equation} (\exists \lambda \in \Lambda_{n,m}^r)\left[\phi_1(x) \wedge \left(\bigwedge_i \rho_{n,m}(f_i(x)) = \lambda_i\right) \wedge \phi_2(\lambda) \right]\label{eq:Kqf-definable}\end{equation}where $\lambda = (\lambda_1, \ldots, \lambda_r)$. Here $\psi_1$ is a quantifier-free formula in the language of the main sort $K$, and $\phi_2$ is a formula in the language of the  $\Lambda_{n,m}$-sort (not necessarily quantifierfree).\\\\
\subsection{Cell Decomposition} \label{subsec:celdec}
The following notation will be convenient.
Let $D \subseteq \Lambda_{n,m}^r \times K^k$ be a definable set, and suppose that
 $r'\leqslant r$, $k'\leqslant k$ and $k'+r' <k+r$. For any $(\rho, b) \in \Lambda_{n,m}^{r'}\times K^{k'}$, the notation $D(\rho,b)$ denotes the set
\[D(\rho,b):=\{(\lambda, x) \in \Lambda_{n,m}^{r-r'}\times K^{k-k'} \ | \ (\rho, \lambda, b,x) \in D\}. \]
We next define our notion of cells. This notion of cells is closely analogous to the notions of cells used for other multi-sorted languages. 
\begin{definition}
A cell in $\Lambda_{n,m}^r \times K^{k+1}$ is a set 
\[\left\{(\lambda,x,t)\in D_{n,m} \times D_K\times K \ \left| \ \begin{array}{l} \ord a_1(x) \ \square_1 \ \ord (t-c(x)) \ \square_2 \ \ord a_2(x),\\\text{and }\rho_{n,m}(t-c(x)) \in D(\lambda, x)\end{array}\hspace{-2pt}\right\}\right.\hspace{-2pt}\]
where
\begin{itemize}
\item $D_{n,m}$ is a subset of $\Lambda_{n,m}^r$, $\LaffinfQE$ -definable without $K$-quantifiers,
\item $D_K$ is a subset of $K^k$, $\LaffinfQE$ -definable without $K$-quantifiers,
\item $D$ is a  subset of $\Lambda_{n,m}^{r+1}\times K^k$, $\LaffinfQE$ -definable without $K$-quantifiers,
\item the functions $a_i(x), c(x)$ are $(\PP_K(\pi),K)$-linear polynomials in the variables $(x_1,\ldots, x_k)$. We call $c(x)$ a center of the cell,
\item $\square_i$ may denote either $<$ or `no condition'.
\end{itemize}
\end{definition}
Note that in the description of such a cell, $\square_i$ can only denote a strict inequality `$<$'. However, in the expressions in $(b)$ of Subsection \ref{subsec:form_of_kqf-formula}, we also used `$\leqslant$' and `$=$'.  We can exclude these options since they can be expressed in terms of a strict inequality. Indeed, \[\ord f(x) \leqslant \ord g(x) \Leftrightarrow \ord f(x) < \ord \pi g(x),\] and \[\ord f(x) = \ord g(x) \Leftrightarrow \ord f(x) < \ord \pi g(x) < \ord \pi^2 f(x).\]
As a first step, we show that cells behave well when taking finite intersections.

\begin{proposition}\label{prop:intersection}
Let $C_1, C_2$ be two cells in $\Lambda_{n,m}^r \times K^{k+1}$. The intersection $C_1 \cap C_2$ can be partitioned as a finite union of cells.
\end{proposition}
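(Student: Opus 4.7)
The plan is to reconcile the two cells by rewriting everything in terms of a single center, partitioning finely enough that the translation between centers is controlled by the auxiliary-sort operations. Write
\[C_i = \bigl\{(\lambda,x,t)\in D_{n_i,m_i}^{(i)}\times D_K^{(i)}\times K \ \bigl|\ \ord a_1^{(i)}(x)\,\square_1^{(i)}\,\ord(t-c_i(x))\,\square_2^{(i)}\,\ord a_2^{(i)}(x),\ \rho_{n_i,m_i}(t-c_i(x))\in D^{(i)}(\lambda,x)\bigr\}.\]
Using the compatibility of the projection maps $\rho_{n,m}$ noted at the end of Subsection \ref{subsec:langdef}, I would first enlarge both indices to $n=\mathrm{lcm}(n_1,n_2)$, $m=\max(m_1,m_2)$. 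The intersections $D_{n,m}^{(1)}\cap D_{n,m}^{(2)}$ and $D_K^{(1)}\cap D_K^{(2)}$ are still $K$-quantifier-free definable, so the only real obstruction to cell form is the difference of centers.

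Set $h(x):=c_2(x)-c_1(x)$, again a $(\PP_K(\pi),K)$-linear polynomial. I would partition $C_1\cap C_2$ according to the relative position of $\ord(t-c_1(x))$ and $\ord h(x)$, matching the three cases of Lemma \ref{lemma:rhoab} applied with $a=t-c_1(x)$, $b=h(x)$, $\delta=-1$:
\textbf{(I)} $\ord(t-c_1(x))+m\leq \ord h(x)$, where $\rho_{n,m}(t-c_2(x))=\rho_{n,m}(t-c_1(x))$;
\textbf{(II)} $\ord h(x)+m\leq\ord(t-c_1(x))$, where symmetrically $\rho_{n,m}(t-c_2(x))=-\rho_{n,m}(h(x))$;
\textbf{(III)} the intermediate regime $0<|\ord(t-c_1(x))-\ord h(x)|<m$, which splits into finitely many pieces indexed by the exponent $r$, in each of which $\rho_{n,m}(t-c_2(x))$ is computed via $\pm_r^{(n,m)}$ from $\rho_{n,m}(t-c_1(x))$ and $\rho_{n,m}(h(x))$;
\textbf{(IV)} the equal-order case $\ord(t-c_1(x))=\ord h(x)$, itself split according to whether $\rho_{n,m}(t-c_1(x))=\rho_{n,m}(h(x))$.

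In each piece of (I), (II), (III), and the non-cancellation branch of (IV), both $\ord(t-c_2(x))$ and $\rho_{n,m}(t-c_2(x))$ are explicit functions of the data at center $c_1(x)$ together with $\ord h(x)$ and $\rho_{n,m}(h(x))$. The second $C_2$-inequality $\ord a_1^{(2)}(x)\,\square\,\ord(t-c_2(x))\,\square\,\ord a_2^{(2)}(x)$ then rewrites as $(\PP_K(\pi),K)$-linear bounds on $\ord(t-c_1(x))$, to be intersected with those from $C_1$, and the projection condition $\rho_{n,m}(t-c_2(x))\in D^{(2)}(\lambda,x)$ pulls back to a $K$-quantifier-free condition on $\rho_{n,m}(t-c_1(x))$ parameterised by $\rho_{n,m}(h(x))$. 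Equality conditions coming from the partition (such as $\ord(t-c_1(x))=\ord h(x)+r$) are put in cell form via the author's observation that $\ord f=\ord g \Leftrightarrow \ord f<\ord\pi g<\ord\pi^2 f$, so each piece is a cell with center $c_1(x)$.

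The delicate step, and I expect the main obstacle, is the cancellation branch of (IV), where $\rho_{n,m}(t-c_2(x))$ is not determined by the level-$(n,m)$ data on $t-c_1(x)$. The trick there is to swap roles and use $c_2(x)$ as the center of the corresponding cell: the constraints of this piece become $\ord(t-c_1(x))=\ord h(x)$ and $\rho_{n,m}(t-c_1(x))=\rho_{n,m}(h(x))$, which by the computation in Section \ref{subsec:langdef} force $\ord(t-c_2(x))\geq \ord h(x)+m$, a lower bound in cell form (with $\square$ being $<$ via $\ord(\pi^{-1}h)<\ord(t-c_2)$, say); the $C_1$-projection requirement $\rho_{n,m}(t-c_1(x))\in D^{(1)}(\lambda,x)$ becomes a condition on $\lambda$ and $x$ alone, and so can be absorbed into $D_{n,m}$ and $D_K$. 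The number of pieces is bounded in terms of $m$, hence finite, and assembling them produces the desired cell partition of $C_1\cap C_2$.
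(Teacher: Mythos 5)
Your proof follows essentially the same route as the paper's: both partition $C_1\cap C_2$ according to the relative position of $\ord(t-c_1(x))$ and $\ord(c_1(x)-c_2(x))$ (the paper via the ultrametric trichotomy into three types of pieces, you via cases (I)--(IV)), and on each piece use the operations $\pm^{(n,m)}_r$ of Lemma \ref{lemma:rhoab} to rewrite the residue condition of one cell in terms of the other's center, keeping $c_2(x)$ as center exactly in the cancellation case. The only slip is quantitative: in your cancellation branch the conditions $\ord(t-c_1)=\ord h$ and $\rho_{n,m}(t-c_1)=\rho_{n,m}(h)$ force only $\ord(t-c_2)\geqslant \ord h + m-\gamma_n(h)$ rather than $\ord h+m$ (since $\rho_{n,m}$ retains the angular component only modulo $\pi^{m-\gamma_n}$), but this is repaired by a further finite partition on the value of $\gamma_n(h(x))\in\{0,\ldots,n-1\}$ and does not change the structure of the argument.
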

\begin{proof}
First consider semi-cells of the following form:
\[C_{c}^{D}(a_1,a_2)\hspace{-1pt} :=\hspace{-1pt}\left\{(\lambda, x,t) \in D\times K \ | \  \ord a_1(x)\,  \square_1 \, \ord
(t-c(x))\,  \square_2 \, \ord a_2(x)\right\}\hspace{-2pt}, \]
Using the ultrametric property of the valutation, it is easy to see that
the intersection of two semi-cells 
$C_{c_1}^{D_1}(a_1,a_2)$ and $C_{c_2}^{D_2}(b_1,b_2)$ can be
partitioned as a finite union of sets $A$, such that either
 $A$ is the set of all $(\lambda,x,t) \in D\times K$ on which
\begin{equation}
\ord (t-c_1(x)) = \ord (t-c_2(x)) = \ord (c_1(x) -
c_2(x)),\label{vw3}
\end{equation}
with $D$ a subset of $\Lambda_{n,m}^r \times K^k$, definable without $K$-quantifiers,\\
 or $A$ is a semi-cell $C_c^E(e_1,e_2)$, with the center $c(x)$ equal to $c_1(x)$ or $c_2(x)$, such that one of the
following is true on $A$:
\begin{eqnarray}
\ord (t-c(x))& > &\ord(c_1(x)-c_2(x)),\label{vw1}\\
\ord (t-c(x)) & < & \ord (c_1(x)-c_2(x)) .\label{vw2}
\end{eqnarray}
A set that satisfies one of those 3 conditions, say condition $(l)$, will be referred to as a set of type $(l)$.
A general  cell $C_{c}^{D} (a_1,a_2, D_{\rho})$ has
the form:
\[ \left\{(\lambda,x,t) \in D\times K\ \left|  \begin{array}{l}\ord a_1(x)\,  \square_1 \, \ord
(t-c(x))\,  \square_2 \, \ord a_2(x), \\ \text{and }\rho_{n,m}(t-c(x))
\in D_{\rho}(x,\lambda)\end{array}\right\}\right.\] 
We want to intersect two
cells $C_{c_1}^{D_1} (a_1,a_2, D_{\rho}^{(1)})$ and $C_{c_2}^{D_2} (b_1,b_2,
D_{\rho}^{(2)})$. By the discussion above, we can write 
\[C_{c_1}^{D_1} (a_1,a_2, D_{\rho}^{(1)}) \cap C_{c_2}^{D_2} (b_1,b_2, D_{\rho}^{(2)}) = \left( A^{(\ref{vw3})} \cup \bigcup_i A_i^{(\ref{vw1})} \cup \bigcup_j A_j^{(\ref{vw2})}\right)\cap Q\]
where \[Q = \left\{(\lambda,x,t) \in \Lambda_{n,m}^r \times K^{k+1} \ | \begin{array}{l}  \rho_{n,m}(t-c_i(x)) \in D_\rho^{(i)}(x,\lambda), \quad \text{for\ } i = 1,2
 \end{array}\right\}\] and $A_i^{(l)}$ is a set of type $(l)$. We will show that each $A_i^{(l)} \cap Q$ can be written as a finite union of cells .
After a straightforward further partitioning we may suppose that $t-c_1$ and $t-c_2$ are both nonzero, and thus that $0 \not \in D_{\rho}^{(i)}(\lambda,x)$ for any $(\lambda, x) \in D_i$. \\\\The first part of the above intersection is $A^{(\ref{vw3})} \cap Q$. 
If we define $B_1$ to be the set
\[B_1 :=\left\{(\lambda, x, \rho) \in D_1\times \Lambda_{n,m} \ \left| \ \begin{array}{l} \rho \in D_{\rho}^{(1)}(\lambda,x), \\\text{and } \rho +_{0} \rho_{n,m}(c_1(x)-c_2(x)) \in D_{\rho}^{(2)}(\lambda,x)\end{array}\right\}\right.\hspace{-2pt}\]
then $A^{(\ref{vw3})} \cap Q=S$, with \[S:= \left\{(\lambda, x,t) \in (D_1 \cap D_2)\times K \ \left| \begin{array}{l} \ord(t-c_1(x)) = \ord (c_1(x)-c_2(x)), \\\text{and } \rho_{n,m}(t-c_1(x)) \in B_1(\lambda,x)\end{array}\right\}\right.\hspace{-2pt}\]  
Indeed: if $(\lambda,x,t) \in A^{(\ref{vw3})} \cap Q$, then $\rho_{n,m}(t-c_2(x)) = \rho_{n,m}(t-c_1(x)) +_0 \rho_{n,m}(c_1(x)-c_2(x))$, and therefore $\rho_{n,m}(t-c_1(x))\in B_1(\lambda,x)$. On the other hand, the second condition in the description of $S$ implies that $\rho_{n,m}(t-c_1(x)) \,+_0\, \rho_{n,m}(c_1(x)-c_2(x)) \neq 0$, and since $\ord(t-c_1(x)) = \ord(c_1(x)-c_2(x))$, it follows from the definition of $+_0$  that also $\ord(t-c_2(x)) = \ord(t-c_1(x))$. But that means that $\rho_{n,m}(t-c_2(x)) = \rho_{n,m}(t-c_1(x)) +_0 \rho_{n,m}(c_1(x)-c_2(x))$, and thus $\rho_{n,m}(t-c_2(x)) \in D_\rho^{(2)}$, as required.\\\\
On a semi-cell $A_i^{(\ref{vw1})}$ with center $c_1(x)$, the condition    $\ord(t-c_1(x)) > \ord (c_1(x) - c_2(x))$ holds. After a straightforward further partitioning, we get semi-cells $A_{i,j}^{(\ref{vw1})}$ with the same center, such that on each $A_{i,j}^{(\ref{vw1})}$, one of the conditions 
\begin{align}
&\ord (t-c_1(x)) > \ord (c_1(x) - c_2(x)) + m,\text{ \ or \ }\label{easy}\\  
&\ord (t-c_1(x)) = \ord(c_1(x) - c_2(x)) + k, \quad \text{for} \ 0 < k <m\label{harder}
\end{align} holds.
If condition (\ref{easy}) holds on $A_{i,j}^{(\ref{vw1})}$, then we can simply put
\[A_{i,j}^{(\ref{vw1})}\cap Q = \left\{(\lambda, x,t) \in A_{i,j}^{(\ref{vw1})} \ \left| \begin{array}{l} \rho_{n,m}(t-c_1(x)) \in D_\rho^{(1)}(\lambda,x), \\\text{and } \rho_{n,m}(c_1(x)-c_2(x)) \in D_\rho^{(2)}(\lambda,x)\end{array}\right\}\right.\]
since in this case $\rho_{n,m}(t-c_2(x)) = \rho_{n,m}(c_1(x)-c_2(x))$. If a conditon of type (\ref{harder}) holds on $A_{i,j}^{(\ref{vw1})}$, then there exists some $r$ with $0\leqslant rn<m$ such that
\[\rho_{n,m}(t-c_2) = \rho_{n,m}(c_1-c_2) +_r \rho_{n,m}(t-c_1).\] If we define $B_1$ to be the set
\[B_1 :=\left\{(\lambda, x, \rho) \in D_1\times \Lambda_{n,m} \ \left| \ \begin{array}{l} \rho \in D_{\rho}^{(1)}(\lambda,x),  \\\text{and }  \rho_{n,m}(c_1(x)-c_2(x))+_{r} \rho \in D_{\rho}^{(2)}(\lambda,x)\end{array}\right\}\right.\]  
 then  $A_{i,j}^{(\ref{vw1})}\cap Q$ is equal to the cell
\[A_{i,j}^{(\ref{vw1})}\cap Q= \{(\lambda,x,t) \in A_{i,j}^{(\ref{vw1})} \ | \ \rho_{n,m}(t-c_1) \in B_1(\lambda,x)\}.\]
The situation is completely similar for sets $A_j^{(\ref{vw2})}\cap Q$.\end{proof}
\noindent Our aim is to use cells to give a simple description of sets definable in $\LaffinfQE$ without $K$-quantifiers.  For this we will need the following lemma.

\begin{lemma}\label{lemma:polycenters}
Let $f_1(x,t), \ldots, f_r(x,t)$ be $(\PP_K(\pi),K)$-linear polynomials in variables $(x_1,\ldots, x_n,t)$. There exists a finite partition of $K^{k+1}$ into cells,  $(\PP_K(\pi),K)$-linear polynomials $c(x), d_i(x), h_i(x)$, $a_i \in \PP_K(\pi)$ and a $\Lambda_{n,m}$-polynomial $g_i$ in $r$ variables, such that the following holds for all $f_i(x,t)$ on each cell $A$ with center $c(x)$:
 \begin{enumerate}
\item $\rho_{n,m}(f_i(x,t)) = g_i(\rho_{n,m}(t-c(x)),\rho_{n,m}(d_2(x)), \ldots, \rho_{n,m}(d_r(x))),$
\item  $\ord f_i(x,t) = \left\{\begin{array}{l}\ord h_i(x) \ \text{ for all } (x,t) \in A ,\\ \text{\quad or} \\  \ord a_i(t-c(x)) \ \text{ for all } (x,t) \in A.\end{array}\right.$ \\
\end{enumerate}
\end{lemma}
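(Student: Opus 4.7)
The plan is to put each $f_i$ into canonical affine-in-$t$ form, then partition $K^{k+1}$ by ultrametric comparisons so that a single center controls every $f_i$ on each piece, and finally use Lemma \ref{lemma:rhoab} to express each $\rho_{n,m}(f_i(x,t))$ in terms of $\rho_{n,m}(t-c(x))$ and a handful of auxiliary $\rho_{n,m}(d_i(x))$.

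Each $f_i(x,t)$ has the shape $\alpha_i t+\beta_i(x)$ with $\alpha_i\in\PP_K(\pi)$ and $\beta_i$ a $(\PP_K(\pi),K)$-linear polynomial in $x$. If $\alpha_i=0$, choosing $h_i:=\beta_i$ and $d_i:=\beta_i$ trivially verifies (1)--(2), so I may assume every $\alpha_i\neq 0$ and write $f_i(x,t)=\alpha_i(t-c_i(x))$ with $c_i(x):=-\alpha_i^{-1}\beta_i(x)$ again $(\PP_K(\pi),K)$-linear (since $\PP_K(\pi)$ is a field). I tentatively take $c(x):=c_1(x)$ as the common center. For each other $j$, the identity $t-c_j=(t-c)-(c_j-c)$ and the ultrametric inequality force exactly one of the following to hold at each $(x,t)$: \emph{(A$_j$)} $\ord(t-c)<\ord(c_j-c)$ and hence $\ord(t-c_j)=\ord(t-c)$; \emph{(B$_j$)} $\ord(t-c)>\ord(c_j-c)$ and hence $\ord(t-c_j)=\ord(c_j-c)$; or \emph{(C$_j$)} $\ord(t-c)=\ord(c_j-c)$. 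I partition $K^{k+1}$ by the vector of active alternatives $(\star_j)_{j\geqslant 2}$. On alternatives (A$_j$) and (B$_j$) the defining conditions are strict inequalities of exactly the shape allowed by a cell, and they directly give part (2): take $a_j:=\alpha_j$ in case (A$_j$), or $h_j:=\alpha_j(c_j-c)$ in case (B$_j$).

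The equality case (C$_j$) is the main obstacle, since the cell definition only allows strict-inequality bounds on $\ord(t-c(x))$. On a piece where (C$_j$) holds, I further subdivide by the integer $r_j$ with $0\leqslant r_jn<m$ that appears in the middle clause of Lemma \ref{lemma:rhoab}, producing at most $\lceil m/n\rceil+1$ sub-pieces (one per value of $r_j$, plus the coarse zone governed by the first clause of that lemma). On each sub-piece Lemma \ref{lemma:rhoab} gives $\rho_{n,m}(t-c_j)$ as an explicit $\Lambda_{n,m}$-term in $\rho_{n,m}(t-c)$ and $\rho_{n,m}(c_j-c)$; the datum of $r_j$ together with $\equiv_{n,k}$-conditions on the $\Lambda$-side pins $\ord(t-c)$ modulo $n$, and the residual $K$-side constraints can be converted to strict inequalities via the identity $\ord f=\ord g\Leftrightarrow\ord f<\ord\pi g<\ord\pi^2 f$ noted right after the cell definition. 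Wherever this conversion still fails to fit the cell template, I switch the center from $c$ to $c_j$ on the offending piece and rerun the previous paragraph's analysis relative to the new center; with respect to $c_j$ the problematic alternative becomes a (B)-type case and is thus expressible by a strict inequality.

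Once each sub-piece is realized as a cell with center $c(x)$ and strict-inequality bounds drawn from $\{\pi^\ell(c_j(x)-c(x))\}$ and $\{\pi^\ell\alpha_j\}$, part (1) assembles as follows: on each sub-piece $\rho_{n,m}(t-c_j)$ is a $\Lambda_{n,m}$-term in $\rho_{n,m}(t-c)$ and in the $\rho_{n,m}(d_i(x))$ with $d_i$ drawn from $\{c_j(x)-c(x)\}_j$; the $\PP_K(\pi)$-scalar $\alpha_j$ acts on $\Lambda_{n,m}$ by a term compatible with $\rho_{n,m}$, so $\rho_{n,m}(f_j)$ assembles into the required $g_j$. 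The only genuinely non-routine content is the equality case (C$_j$), and the case analysis there closely parallels the three-way alternative (\ref{vw1})--(\ref{vw3}) from the proof of Proposition \ref{prop:intersection}.
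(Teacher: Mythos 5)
Your overall strategy --- normalize each $f_i$ to $\alpha_i(t-c_i(x))$, force a common center by partitioning according to the ultrametric comparison of $\ord (t-c)$ with $\ord (c_j-c)$, and transfer $\rho_{n,m}(t-c_j)$ to a $\Lambda_{n,m}$-term in $\rho_{n,m}(t-c)$ and $\rho_{n,m}(c_j-c)$ via Lemma \ref{lemma:rhoab} --- is the same as the paper's (which runs it as an induction on $r$, eliminating one center at a time with the help of Proposition \ref{prop:intersection}). However, your case analysis is misaligned with Lemma \ref{lemma:rhoab} in a way that leaves part (1) unproved exactly where it is hardest. The refinement by the integer $r_j$ belongs to the \emph{strict-inequality} alternatives (A$_j$) and (B$_j$): when $0<|\ord (t-c)-\ord (c_j-c)|<m$, the middle clause of Lemma \ref{lemma:rhoab} makes $\rho_{n,m}(t-c_j)$ depend on \emph{both} $\rho_{n,m}(t-c)$ and $\rho_{n,m}(c_j-c)$ \emph{and} on the gap $r_jn$; since that gap varies over an (A$_j$)-piece, no single term $g_j$ exists there until you subdivide by the gap (only when the gap is at least $m$ does one argument dominate outright). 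You declare (A$_j$) and (B$_j$) finished after checking only part (2) and never perform this subdivision, so part (1) fails on those pieces as written. Conversely, on the equality pieces (C$_j$) the middle clause you invoke never applies (it requires strictly different orders); what is relevant there is the third clause, i.e.\ the operation $+_0$, together with its domain restriction $D_+$.

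That domain restriction is the second gap: on a (C$_j$)-piece where the leading terms cancel, one has $\ord (t-c_j)>\ord (t-c)=\ord (c_j-c)$, and $\rho_{n,m}(t-c_j)$ is genuinely \emph{not} a function of $\rho_{n,m}(t-c)$ and $\rho_{n,m}(c_j-c)$. This --- not the strict-inequality formatting of the cell template --- is the reason a center switch to $c_j$ is forced on such a piece. Once you switch, the remaining polynomials $f_{j'}$ must be re-expressed relative to $c_j$, and nothing prevents a fresh cancellation of $t-c_j$ against $c_{j'}-c_j$ there, so you owe a termination argument for the iterated switching. The paper gets termination for free from its induction on the number of polynomials (only two centers are ever in play at once); if you want the all-at-once version you should either order the candidate centers by $\ord (t-c_j)$ and switch to one of maximal order, or simply fall back on that induction.
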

\begin{proof}
If $r$=1, our claim is trivial, since we can write (if $b\neq 0$):
\[f_1(x,t) = \sum_{i=1}^n a_ix_i +bt+d = b\left(t + \sum_{i=1}^n\frac{a_i}{b} x_i + \frac{d}{b}\right)\]
Now suppose the lemma is true for polynomials $f_1(x,t),\ \ldots, f_{r-1}(x,t)$. This means that there exists a partition of
$K^{k+1}$ in cells $A$ with center $c(x)$, such that on each
cell,
\[\ord f_i(x,t) = \ord a_i(t-c(x)) \text{ or } \ord f_i(x,t) = \ord h_i(x).\] We may assume that $f_r(x,t)=a_r(t-c_r(x))$, for some $(P_K(\pi),K)$-linear polynomial $c_r(x)$. 
Partition $K^{k+1}$ in the following way:
\begin{eqnarray*}K^{k+1} &=& \{(x,t) \in K^{k+1}\ |\
\ord(t-c(x)) < \ord (c(x)-c_r(x)) +m\}\nonumber\\
&  & \cup \ \{(x,t) \in K^{k+1}\ |\
\ord(t-c(x)) > \ord (c(x)-c_r(x)) +m\}\label{partqp}\\
& & \cup\hspace{-3pt} \bigcup_{l=-m}^m\hspace{-5pt} \{(x,t) \in \Q_p^{k+1}\ |\ \ord(t-c(x)) = \ord
(c(x)-c_r(x))+l \}\nonumber
\end{eqnarray*}
Take intersections of the cells $A$ with the above parts of
$K^{k+1}$. By Proposition \ref{prop:intersection}, this results in a
finite partition of $K^{k+1}$ in cells $B$.\\
On each cell $B$, we can now eliminate one of the centers ($c(x)$ or $c_r(x)$). 
For example, if for some $-m\leqslant l<0$, the relation $\ord(t-c(x)) = \ord
(c(x)-c_r(x))+l$ holds on $B$, there exists $r$ with $ 0\leqslant rn <m$ such that
\[\rho_{n,m}(t-c_r(x))= \rho_{n,m}(t-c(x)) +_r \rho_{n,m}(c(x)-c_r(x)), \] so that we can eliminate the center $c_r(x)$ from the description of all polynomials for $(x,t) \in B$. The other cases are similar. 
\end{proof}
\noindent We can now give a characterization of  the subsets of $K^{k+1}$ that are quantifier-free definable in $\LaffinfQE$.

\begin{theorem}
Let $B \subseteq K^{k+1}$ be a set  that is $\LaffinfQE$ - definable without using $K$-quantifiers. There exist $r\in \N,\ n,m \in \N\backslash\{0\}$ and a finite number of disjoint cells $C_i \subseteq \Lambda_{n,m}^r\times K^{k+1}$ such that 
\[B =\{(x,t)\in K^{k+1} \ | \ \exists \lambda \in \Lambda_{n,m}^r  :  (\lambda,x,t) \in \cup_i C_i\}. \]
\end{theorem}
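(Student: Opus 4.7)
The plan is to start from the syntactic normal form of Subsection \ref{subsec:form_of_kqf-formula}, reduce to a common center via Lemma \ref{lemma:polycenters}, and then repackage the result as a disjoint union of cells, using Proposition \ref{prop:intersection} for the final cleanup.

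First I would apply the reduction of Subsection \ref{subsec:form_of_kqf-formula} to write $B$ as a finite (not yet disjoint) union of expressions of shape (\ref{eq:Kqf-definable}); by composing with the projection maps $\rho_{n,m}\colon \Lambda_{kn,m'}\to\Lambda_{n,m}$ and padding the $\lambda$-tuples with dummy components, I may assume that a single pair $(n,m)$ and a single length $r$ work for every disjunct. Then I collect every $(\PP_K(\pi),K)$-linear polynomial $f(x,t)$ that occurs anywhere (inside a $\rho_{n,m}$-term, inside a $\phi_1$, or inside an order comparison) and apply Lemma \ref{lemma:polycenters} to this finite family. This produces a partition of $K^{k+1}$ into finitely many cells $A_s$ with common center $c_s(x)$ on which each such $f$ satisfies
\[\rho_{n,m}(f(x,t)) = g_{f,s}\bigl(\rho_{n,m}(t-c_s(x)),\rho_{n,m}(d_{s,1}(x)),\ldots,\rho_{n,m}(d_{s,q}(x))\bigr),\]
and $\ord f(x,t)$ equals either $\ord h_{f,s}(x)$ or $\ord a_{f,s}(t-c_s(x))$.

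Second I would restrict a single disjunct of (\ref{eq:Kqf-definable}) to $A_s$ and rewrite it as a cell. The order comparisons in $\phi_1$ split into pure-$x$ comparisons (absorbed into the base $D_K$), comparisons between two type-$a$ terms (which collapse to constants from $\PP_K(\pi)$), and mixed comparisons $\ord a(t-c_s(x))\ \square\ \ord h(x)$. After further subpartitioning $A_s$ according to which $h$ realises the supremum of the lower bounds and which realises the infimum of the upper bounds, the mixed comparisons collapse to a single interval condition $\ord\alpha(x)\ \square_1\ \ord(t-c_s(x))\ \square_2\ \ord\beta(x)$ matching the cell definition. The equalities $\rho_{n,m}(f_{j,i}(x,t))=\lambda_i$ become $g_{f_{j,i},s}(\rho_{n,m}(t-c_s(x)),\rho_{n,m}(d_{s,q}(x))_q)=\lambda_i$; I then encode $\phi_2(\lambda)$ into $D_{n,m}$ and the functional equalities, read as a condition on $\rho_{n,m}(t-c_s(x))$ parametrised by $(\lambda,x)$, into the auxiliary set $D$. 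This produces, for each disjunct $j$ and each $s$, a cell $C_{j,s}$ whose $\lambda$-projection to $K^{k+1}$ equals the trace of the $j$-th disjunct on $A_s$, so $B$ is already the $\lambda$-projection of a finite, but not yet disjoint, union of cells.

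It remains to disjointify. Cells built over different pieces $A_s\neq A_{s'}$ are automatically disjoint in $\Lambda_{n,m}^r\times K^{k+1}$ since they project to disjoint subsets of $K^{k+1}$. Cells built from different disjuncts over the same $A_s$ all share the center $c_s(x)$, and I would apply the standard greedy procedure $C_1,\ C_2\setminus C_1,\ C_3\setminus(C_1\cup C_2),\ldots$: Proposition \ref{prop:intersection} cuts each intersection $C_k\cap C_j$ into finitely many cells with the same center, and the relative complement of a cell inside a cell with the same center splits into finitely many cells by negating, in turn, the $\lambda$-condition, the base condition on $x$, the interval condition on $\ord(t-c_s(x))$, and the $\Lambda$-condition on $\rho_{n,m}(t-c_s(x))$. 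Each such negation is again expressible without $K$-quantifiers in the cell template, so the procedure terminates after finitely many steps. I expect this last bookkeeping to be the main obstacle, and it is precisely what forces the proof to rely on Proposition \ref{prop:intersection} and not just on Lemma \ref{lemma:polycenters}.
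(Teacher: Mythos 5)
Your proposal is correct and follows essentially the same route as the paper: reduce to the normal form of Subsection \ref{subsec:form_of_kqf-formula}, use Lemma \ref{lemma:polycenters} to pass to a common center on each piece of a partition, and then invoke Proposition \ref{prop:intersection} to rewrite the resulting intersections as finite unions of cells. The only difference is that you spell out the final disjointification (greedy complements of cells sharing a center), which the paper leaves implicit in the phrase ``partitioned as a finite union of cells''; that added care is sound but not a different method.
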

\begin{proof}
By the discussion in Section \ref{subsec:form_of_kqf-formula}, it suffices to show that a set of the following form can be partitioned as a finite union of cells: 
\[ E:=\left\{(\lambda, x, t) \in D_{n,m} \times K^{k+1} \ \left| \ (x,t) \in D \wedge \left(\bigwedge_i \rho_{n,m}(f_i(x,t)) = \lambda_i\right) \right\}\right.\]where $\lambda = (\lambda_1, \ldots, \lambda_r)$. $D$ is a quantifier-free definable subset of $K^{k+1}$ (using only the language of the main sort $K$), and $D_{n,m}$ is a definable subset of $\Lambda_{n,m}^r$ (using the language on the $\Lambda_{n,m}$, and possibly using quantifiers over $\Lambda_{n,m}$.)\\\\
We may suppose that $D$ consists of all $(x,t)\in K^{k+1}$ that satisfy a finite number of relations of the form
\begin{equation} \ord f_{i,1}(x,t) < \ord f_{i,2}(x,t),\label{eq:kqf=cells1}\end{equation}
where the $f_{i,j}(x,t)$ are $(\PP_K(\pi),K)$-linear polynomials. Using Lemma \ref{lemma:polycenters}, we can find a partition of $\Lambda^r\times K^{k+1}$ in cells $A$ with center $c(x)$, such that the residue and order of all polynomials $f_i(x,t)$ and $f_{i,j}(x,t)$ can be expressed as in the formulation of Lemma \ref{lemma:polycenters}. This implies that on $E \cap A$, a relation of the form (\ref{eq:kqf=cells1}) simplifies to either
\begin{equation} \ord (t-c(x)) < \ord h_i(x), \quad \text{or possibly}\quad \ord h_{i,1}(x) < \ord h_{i,j}(x),\label{eq:kqf=cells2}\end{equation}
for some $(\Q(\pi),K)$-linear polynomials $h_i(x), h_{i,j}(x)$. Also, on $E\cap A$, the condition $\bigwedge_i \rho_{n,m}(f_i(x,t)) = \lambda_i$ is equivalent to a formula of the form (for ease of notation, we assume that the center of 
$A$ is the center of $f_1(x,t)$):
\begin{equation}\rho_{n,m}(t-c(x)) = a\lambda_1 \wedge \bigwedge_{i=2}^{r} [\lambda_i = g_i(\lambda_1, \rho_{n,m}(d_2(x)),\ldots, \rho_{n,m}(d_r(x)))],\label{eq:kqf=cells3}\end{equation} for some constant $a \in \PP_K(\pi)$. But this implies that $E \cap A$ is equal to the intersection of $A$ with the cell described by (\ref{eq:kqf=cells2}) and (\ref{eq:kqf=cells3}). By Proposition \ref{prop:intersection}, this can be written as a finite union of cells.
\end{proof}

\subsection{Definable sets and functions} \label{subsec:defsetfun}
Define a semi-additive set to be a set of the following type.
\begin{definition}\label{def:semi-additive}
A set $A \subseteq K^{k+1}$ is called semi-additive if there exist $r\in \N$ and a finite number of disjoint cells $C_i\subseteq \Lambda_{n,m}^r\times K^{k+1}$ such that 
\[A =\{(x,t)\in K^{k+1} \ | \ \exists \lambda \in \Lambda_{n,m}^r  :  (\lambda,x,t) \in \cup_i C_i\}. \]
\end{definition}
\noindent By the next theorem, the semi-additive subsets of $K^{k+1}$ are precisely the $\LaffinfQE$-definable subsets of $K^{k+1}$. And consequently, the $\Laffinf$-definable subsets of $K^k$ are just the semi-additive subsets.
\begin{theorem}
Let $A\subseteq K^{k+1}$ be a semi-additive set. The projection \[B=\{x \in K^k \ | \ \exists t\in K:(x,t) \in A\}\] is a semi-additive set. 
\end{theorem}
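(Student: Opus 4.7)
My plan is to eliminate the existential $K$-quantifier $\exists t$ from the defining formula for $B$; once $B$ is $\LaffinfQE$-definable without $K$-quantifiers, the characterization theorem immediately above Definition \ref{def:semi-additive} supplies the desired description as a projection of a finite union of cells.

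Since projection commutes with finite unions, it suffices to treat the case where $A$ is the projection of a single cell $C \subseteq \Lambda_{n,m}^r \times K^{k+1}$. Making the substitution $u = t - c(x)$, the projection $B$ takes the form
\[
B = \bigl\{x \in D_K \,:\, \exists \lambda,\, (\lambda,x) \in D_{n,m},\ \exists u \in K,\ \ord a_1(x)\,\square_1\,\ord u\,\square_2\,\ord a_2(x)\text{ and } \rho_{n,m}(u) \in D(\lambda,x)\bigr\},
\]
and the task reduces to eliminating the inner $\exists u$. The three cases in which at least one of $\square_1,\square_2$ denotes `no condition' are easy: for any nonzero $\rho \in \Lambda_{n,m}$ a preimage $u$ of prescribed residue class modulo $n$ can be chosen with $\ord u$ arbitrarily large (or small), while admissibility of $\rho = 0$ (i.e.\ $u = 0$) is governed by vanishing of $a_1(x)$ or $a_2(x)$ and is absorbed by a further partition of $D_K$.

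The main obstacle is the doubly-bounded case $\ord a_1(x) < \ord u < \ord a_2(x)$. For a fixed nonzero $\rho \in D(\lambda,x)$, existence of $u$ amounts to the existence of $s \in \Z$ with $\ord a_1(x) < ns + \ord\rho < \ord a_2(x)$; equivalently, the integer interval $(\ord a_1(x), \ord a_2(x))$ must contain an integer of residue $\ord\rho \bmod n$. I handle this by partitioning $D_K$ according to $d(x) := \ord a_2(x) - \ord a_1(x)$: the conditions $d(x) \leqslant 0$, $d(x) \geqslant n+1$, and $d(x) = j$ for each $j \in \{1,\ldots,n\}$ are $\Laffinf$-definable via the divisibility predicate (for instance $d(x) \geqslant n+1$ iff $\pi^{n+1} a_1(x) \mid a_2(x)$). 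On $d(x) \leqslant 0$ no $s$ exists; on $d(x) \geqslant n+1$ every residue class modulo $n$ is attained; and on each piece $d(x) = j$ the answer depends only on $\ord\rho - \ord a_1(x) \bmod n$, a datum encoded by $\rho_{n,m}(a_1(x))$ and $\ord\rho$ and hence expressible as a pure $\Lambda_{n,m}$-sort condition.

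Combining all cases replaces the inner $\exists u \in K$ by a $K$-quantifier-free $\LaffinfQE$-formula in $(\lambda,x)$, so $B$ becomes $\LaffinfQE$-definable without $K$-quantifiers. The cited characterization theorem then writes $B$ as a projection of a finite disjoint union of cells in some $\Lambda_{n',m'}^{r'} \times K^k$, i.e.\ $B$ is semi-additive.
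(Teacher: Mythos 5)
Your proof is correct and follows essentially the same route as the paper's: both reduce the elimination of $\exists t$ to the question of whether the integer interval $(\ord a_1(x),\ord a_2(x))$ contains an integer in a prescribed residue class modulo $n$, and both answer that with the definable congruence predicates on orders. The only difference is cosmetic: the paper first partitions the cells by the residue $l$ of $\ord(t-c(x))$ modulo $n$ and encodes the answer in the single inequality $\ord (a_1(x)\pi^{-l}) + n - \zeta < \ord (a_2(x)\pi^{-l})$ parametrized by the residue $\zeta$ of $\ord (a_1(x)\pi^{-l})$, whereas you partition the base by the length of the interval.
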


\begin{proof}
First, partition the cells $C_i$ occuring in the description of A in smaller cells $C_{i,l}$ such that the extra condition $\ord \rho_{n,m}(t-c(x)) \equiv l \mod n$ holds on $C_{i,l}$. It is then sufficient to prove that we can eliminate the variable $t$ from a formula of the form
\[(\exists t)(\exists \lambda \in \Lambda_{n,m}^r)\left[\begin{array}{cl}& \ord a_1(x) <  \ord (t-c(x)) < \ord a_2(x)\ \\\wedge& \rho_{n,m}(t-c(x))\in D(\lambda,x)\ \\\wedge &\ord \rho_{n,m}(t-c(x))\equiv l \mod n\end{array}\right]\]
and this is equivalent to $(\exists \lambda \in \Lambda_{n,m}^r)\phi(x,\lambda)$, with
\[\phi(x,\lambda) \leftrightarrow (\exists \gamma \in \Gamma_K)\left[\begin{array}{cl}&\ord a_1(x) < \ \gamma < \ord a_2(x) \\ \wedge& \left[D'(\lambda,x)\neq \emptyset\right]\ \wedge\ \left[\gamma \equiv l \mod n\right]\end{array} \right]\]
where $D'(\lambda,x) = D(\lambda,x) \cap \{\mu \in \Lambda_{n,m} \mid \ord \mu \equiv l \mod n\}.$
The formula $\phi(x,\lambda)$ is equivalent with $D'(\lambda,x) \neq \emptyset \wedge (\exists \gamma' \in \Gamma_K)\psi(x, \gamma')$,\ where
\[\psi(x, \gamma') \leftrightarrow \left[ \frac{\ord(a_1(x)\pi^{-l})}{n} < \gamma' <\frac{\ord(a_2(x)\pi^{-l})}{n}\right]\]
Now if $\ord a_1(x)\pi^{-l} \equiv \zeta
\mod n$, for $0 \leqslant \zeta <n$,  then $(\exists \gamma' \in \Gamma_K)\psi(x)$
 is equivalent with
\[ \ord a_1(x)\pi^{-l} + n - \zeta <
\ord a_2(x)\pi^{-l}.\] This completes
the proof, since $\ord a_1(x)\pi^{-l}
\equiv \zeta \mod n$ is a ($K$-quantifier free) $\LaffinfQE$- definable condition on $x$. \end{proof}

\noindent It is now easy to give a characterization of semi-additive functions:

\begin{lemma}
Let $f: B\subseteq K^k \to K^l$ be an $\Laff^{\pi}$-definable function. There exists a finite partition of $B$ in cells $A$ such that $f_{|A}$ has the form
\[f_{|A}: A\to K^l: x\mapsto (f_1(x), \ldots, f_l(x)),\]
where the $f_i(x)$  are $(\PP_K(\pi), K)$-linear polynomials.
\end{lemma}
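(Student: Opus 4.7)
The plan is to reduce to the case $l=1$, examine the graph of $f$ via the cell decomposition theorem just proved, and show that on each cell the graph is forced to coincide with the cell's center. For the reduction, since each component $f_j : B \to K$ is individually $\Laff^{\pi}$-definable, I treat $l=1$ first and then take a common refinement of the resulting partitions using Proposition~\ref{prop:intersection}. So assume $l=1$ and consider the graph $G_f := \{(x,t) \in B \times K \mid t = f(x)\}$, which is $\Laff^{\pi}$-definable, hence semi-additive by the preceding characterization. Write $G_f = \{(x,t) : \exists \lambda \in \Lambda_{n,m}^r,\ (\lambda, x, t) \in \bigcup_i C_i\}$ for disjoint cells $C_i \subseteq \Lambda_{n,m}^r \times K^{k+1}$ with centers $c_i(x)$ and associated data $a_1^{(i)}, a_2^{(i)}, D^{(i)}$, and let $B_i \subseteq B$ denote the $K^k$-projection of $C_i$.

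The key claim is that $f(x) = c_i(x)$ on each $B_i$. Suppose not: pick $x_0 \in B_i$ with $f(x_0) \neq c_i(x_0)$ and a witness $\lambda_0$ with $(\lambda_0, x_0, f(x_0)) \in C_i$, and set $\gamma := \ord(f(x_0) - c_i(x_0)) \in \Z$. For every $w \in R_K$, define $t_w := f(x_0) + \pi^{\gamma+m} w$. Then $t_w - c_i(x_0) = (f(x_0) - c_i(x_0))(1 + \pi^m u)$ for some $u \in R_K$, so $\ord(t_w - c_i(x_0)) = \gamma$ and $\acm(t_w - c_i(x_0)) = \acm(f(x_0) - c_i(x_0))$, yielding $\rho_{n,m}(t_w - c_i(x_0)) = \rho_{n,m}(f(x_0) - c_i(x_0))$. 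Hence every condition defining $C_i$ is still satisfied, so $(\lambda_0, x_0, t_w) \in C_i$, which forces $(x_0, t_w) \in G_f$ and therefore $t_w = f(x_0)$ for every $w \in R_K$. Since the valuation is nontrivial, $R_K$ is infinite (the powers $\pi^k$ for $k \geq 0$ are pairwise distinct), so this produces infinitely many distinct values of $f(x_0)$, a contradiction.

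Therefore $f = c_i$ on $B_i$, a $(\PP_K(\pi), K)$-linear polynomial; since the $B_i$ cover $B$, a routine refinement (intersecting with complements of previously treated pieces and applying the cell decomposition theorem one dimension lower to the resulting definable sets) gives a disjoint partition of $B$ into cells. The main obstacle is the key claim above: one has to exploit the coarseness of $\rho_{n,m}$, which discards information beyond the scale $\pi^m$, to produce an infinite family of valid perturbations $t_w$ that still lie in the cell. Any $t_0 \neq c_i(x_0)$ on $G_f \cap C_i$ then generates multi-valuedness of $f$, contradicting $G_f$ being a graph. This is where the infinitude of $R_K$ in a $\Z$-field enters essentially.
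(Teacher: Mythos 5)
Your proposal is correct and follows essentially the same route as the paper: view the graph as a semi-additive set, decompose it into cells, and use the functionality of $f$ to force $t=c(x)$ on each cell. The only difference is that you actually prove the key step the paper merely asserts — namely that any point of the graph with $t\neq c(x)$ would generate an infinite fiber, via the perturbation $t\mapsto t+\pi^{\gamma+m}w$ preserving both the order and the $\rho_{n,m}$-value — which is a welcome addition rather than a deviation.
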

\begin{proof}
The graph of a definable function is a semi-additive set, so the graph of $f$ can be partitioned as in Definition \ref{def:semi-additive}, using a finite number of cells $C_i$. The fact that $f$ is a function, implies that  for each cell $C_i$, and any $x \in D_K$, there exists a unique $t \in K$ such that $(x,t) \in \mathrm{Graph}(f)$. Note however, that this uniqueness condition implies that $t=c(x)$ and thus the function $f$, when restricted to $D_K$, simply maps each $x$ to the center $c(x)$ of the corresponding cell $C_i$, which we assumed to be a $(\PP_K(\pi),K)$-linear polynomial.
\end{proof}

\section{The case of a finite residue field} \label{sec:finres}

For the following class of fields, angular component maps can be defined in a unique way. Note that we do not require the valued field to be Henselian.

\begin{definition}
Let $\FF_q$ be the finite field with $q$ elements and $\ZZ$ the ordered abelian group of integers. We define a  $(\FF_q,\ZZ)$-field  to be a valued field with residue field
isomorphic to $\FF_q$ and value group elementary equivalent to $\ZZ$.
\end{definition}
\noindent  Fix an $(\FF_q,\ZZ)$-field  $K$, fix an element $\pi$ with smallest positive order, such that $\ord \pi =1$. For each integer $n>0$, let $P_n$ be the set of nonzero $n$-th powers in $K$. 
\begin{lemma}\label{lemma:acm} For each integer $m>0$, there is a unique group homomorphism
$$
\acm:K^\times \to (R_K\bmod \pi^m)^\times
$$
such that $\acm(\pi)=1$ and such that $\acm(u)\equiv u \bmod \pi^m$ for any unit $u\in R_K$.
\end{lemma}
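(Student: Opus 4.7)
The plan is to handle uniqueness and existence separately; both arguments rest on two facts: $\Gamma_K/\Z$ is a $\Q$-vector space (since $\Gamma_K$ is a $\Z$-group, being elementarily equivalent to $\Z$), and $A := (R_K/\pi^m R_K)^\times$ is finite.

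For uniqueness, I would first note that the two defining conditions force the formula $\acm(\pi^n u) = u \bmod \pi^m$ on the subgroup $H := \pi^{\Z} R_K^\times$ of $K^\times$. Any two candidate maps therefore agree on $H$, and their ratio factors through $K^\times/H \cong \Gamma_K/\Z$. Since $\Gamma_K/\Z$ is divisible and torsion-free, any homomorphism from it into the finite group $A$ is trivial, so the two candidates coincide.

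For existence, I would let $N := |A|$ and define a partial map $f \colon H \to A$ by $\pi^n u \mapsto u \bmod \pi^m$. The key reduction is the natural isomorphism $H/H^N \cong K^\times/(K^\times)^N$. Once this is established, $f$ factors through $H/H^N$ (because $A$ has exponent $N$), and the inverse of this isomorphism composed with the quotient map $K^\times \to K^\times/(K^\times)^N$ yields the required $\acm$. Proving the isomorphism amounts to two assertions: $K^\times = H \cdot (K^\times)^N$ and $H \cap (K^\times)^N = H^N$. The first comes from divisibility of $\Gamma_K/\Z$: for any $x \in K^\times$, divisibility furnishes $\bar{\gamma} \in \Gamma_K/\Z$ with $N\bar{\gamma} = \overline{\ord x}$, and lifting to $y \in K^\times$ whose valuation projects to $\bar{\gamma}$ makes $xy^{-N} \in H$. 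The second uses torsion-freeness: if $y^N \in H$ then $N \cdot \overline{\ord y} = 0$ in $\Gamma_K/\Z$ forces $\overline{\ord y} = 0$, i.e., $\ord y \in \Z$, so $y \in H$.

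The step most likely to need care is the intersection identity $H \cap (K^\times)^N = H^N$, which guarantees that $f$ is well-defined on the image of $H$ inside $K^\times/(K^\times)^N$. Once this is verified, the remainder assembles routinely and the map $\acm$ so produced visibly satisfies $\acm(\pi) = 1$ and $\acm(u) \equiv u \bmod \pi^m$ for units.
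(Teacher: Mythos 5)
Your argument is correct and is essentially the paper's proof in slightly more abstract clothing: the paper's decomposition $K^\times=\bigsqcup_{\ell=0}^{N_m-1}\pi^{\ell}P_{N_m}R_K^\times$ is exactly your identity $K^\times=H\cdot(K^\times)^{N}$ together with the well-definedness check, which is your $H\cap(K^\times)^{N}=H^{N}$, and both proofs hinge on the target group having order $N$ so that $N$-th powers are forced to $1$. The only real difference is cosmetic: you obtain uniqueness from the divisibility of $K^\times/H\cong\Gamma_K/\Z$, whereas the paper reads it off the explicit normal form $y=\pi^{\ell}x^{N_m}u$.
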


\begin{proof}
Put $N_m:=(q-1)q^{m-1}$ and let $U$ be the set $P_{N_m}\cdot R_K^\times$.
Note that $K^\times$ equals the finite disjoint union of the sets $\pi^\ell\cdot U$ for integers $\ell$ with $0 \leqslant \ell \leqslant N_m-1$.
Hence, any element $y$ of $K^\times$ can be written as a product of the form $\pi^\ell x^{N_m} u$, with $u\in R_K^\times$, $\ell\in\{0,\ldots,N_{m}-1\}$, and $x\in K^\times$.

Since $\acm$ is required to be a group homomorphism to a finite group with $N_m$ elements, it must send $P_{N_m}$ to $1$. Also note that the projection $R_K \to R_K\bmod \pi^m$ (which is a ring homomorphism), induces a natural group homomorphism $p: R_K^\times \to (R_K\bmod \pi^m)^\times$. Now if we write $y=\pi^\ell x^{N_m} u$, we see that $\acm$ must satisfy
\begin{equation}\label{acm}
\acm ( y ) = p(u),
\end{equation}
which implies that the map $\acm$ is uniquely determined if it exists. Moreover, we claim that we can use \eqref{acm} to define $\acm$. This is certainly a well defined group homomorphism:
if one writes $y=\pi^\ell \tilde{ x}^{N_m} \tilde{u}$ for some other $\tilde{u}\in R_K^\times$ and $\tilde{x}\in K^\times$, then clearly $p(u)=p(\tilde {u})$. It is also clear that this homomorphism  sends $\pi$ to $1$ and  satisfies our requirement that $\acm(u)\equiv u \bmod \pi^m$ for any unit $u\in R_K$. 
\end{proof}
$(\F_q,\Z)$-fields satisfy the requirements we listed in the introduction, so if we consider the structure induced by our multi-sorted language, we can apply the cell decomposition results from the previous section. 
 Obviously, since the residue field is now assumed to be finite,  $\Lambda_{n,m}$ will be a finite set. In fact, we can assume that $\Lambda_{n,m}$ is a subset of $R_K$, by choosing a fixed set of representatives for each equivalence class. For example, if $K = \Q_p$, we could take
 \[ \Lambda_{n,m}:= \{p^ra \mid 0\leqslant r < n \wedge \ord a = 0 \wedge 0 < a \leqslant p^{m}-1\}.\] The fact that $\Lambda_{n,m}$ is finite implies that all $\Lambda_{n,m}$-quantifiers can be replaced by conjunctions (for $\forall$) and disjunctions (for $\exists$) over the elements of $\Lambda_{n,m}$.    In particular, 
 if we 
 consider the  2-variable relation
\[S_{n,m}(x,z) \leftrightarrow \rho_{n,m}(x) = \rho_{n,m}(z)),\]
it is possible to `collapse' $\LaffinfQE$ to a mono-sorted language  $ \Laff := (+,-,\cdot_{\pi}, |, \{S_{n,m}\}_{n,m}).$

 It follows immediately from the results of the previous section that
every definable set in this new language is a finite union of cells of the form
\begin{equation} \{(x,t) \in D\times K \ | \ \ord a_1(x) \ \square_1 \ \ord (t-c(x)) \ \square_2 \ \ord a_2(x)\ \wedge \ \rho_{n,m}(t-c(x)) = \lambda\}, \label{eq:collapsedcell}\end{equation}
with $D$ a quantifierfree definable subset of $K^k, \lambda \in \Lambda_{n,m}$; $a_i(x)$ and $c(x)$ are $(\PP_K(\pi),K)$-linear polynomials, and $\PP_K$ is the prime subfield of $K$.

We should compare this with the semi-linear language $(+,- \{\overline{c}\}_{c\in \Q_p}, \{P_n\}_{n\in \N})$ that Liu \cite{liu-94} considered for $\Q_p$. 
A first difference is the use of the relation $S_{n,m}$, instead of the sets of $n$-th powers $P_n$. This difference is much smaller than it may seem at first.
If we define $Q_{n,m}$ to be the set \[Q_{n,m}:=\{x\in K \ | \ \rho_{n,m}(x) = \rho_{n,m}(1)\}\]  then the relation $S_{n,m}(x,y)$ is equivalent to $ x \in yQ_{n,m}$. 
Hence, we replaced expressions like `$x$ is in some coset of $P_n$' by similar expressions that use
 sets $Q_{n,m}$ instead. 
 However, for Henselian $(\F_q,\Z)$-fields, it is easy to see that for any $N \in \N$, $P_N$ can be defined as a finite union of cosets $\lambda Q_{n,m}$ with $\lambda \in K; n,m \in \N$. Since we used cosets of $P_N$ to define the maps $\acm$ (and thus the sets $Q_{n,m}$), the converse is also true. 

Another (seeming) difference is that the language we defined contains the divisibility symbol `$|$'. Liu does not include this symbol, since he showed that for semi-linear sets over $\Q_p$, this relation is quantifierfree definable. We need the symbol if we want to achieve quantifier elimination, but it can be shown, see \cite[Proposition 1]{clu-lee-2011}, that for $(\F_q, \Z)$-fields, the relation $\ord(x-z) < \ord (y-z)$ is definable whenever the relation $\rho_{n,m}(y-x) = \rho_{n,m}(z)$ is definable. So adding the symbol to our language does not affect the number of definable sets. 

A third difference lies in the amount of scalar multiplication which is definable. $\Laff$ has less scalar multiplication than the semi-linear language. (To compare: for the structure $(\Q_p; +,-,\cdot_{\pi}, \{S_{n,m}\}_{n,m})$, scalar multiplication is only definable for constants from $\Q$.) This difference will be important when we compare the definable functions. 
\\\\
Taking these observations into account, we can consider a class of  \emph{semi-affine} structures:
\begin{definition}
Given an $(\F_q,\Z)$-field $K$ and a subfield $L \subseteq K$, let
 $\Laff^L$ be the language
\[ \Laff^L := (+,-, \{\overline{c}\}_{c\in L}, |, \{R_{n,m}\}_{n,m}).\]
The structure $(K, \Laff^L)$ is called a semi-affine structure.
\end{definition}
These languages are variations on the language $\Laff$ we defined above, adding additional symbols for scalar multiplication, and replacing the symbol $S_{n,m}$ by $R_{n,m}$,  a relation which is defined as $R_{n,m}(x,y,z) \leftrightarrow \rho_{n,m}(y-x) = \rho_{n,m}(z)$. We make this (otherwise unnecessary) substitution to point out the link with the language $(\{R_{n,m}\}_{n,m})$, that we studied in \cite{clu-lee-2011}.  

Over $\Q_p$,  Liu's semi-linear language is equivalent with $\Laff^{\Q_p}$. Note also that the structures $(K,\Laff^{\PP_K(\pi)})$ and $(K, \Laff)$ have the same definable sets.
In general, when considering a structue $(K, \Lm)$, we will always assume that if multiplication by $c$ is definable, then $\Lm$ contains a symbol $\overline{c}$ (replacing $\Lm$ by a definitional expansion if necessary). In particular, we assume that $\PP_K \subseteq L$.

To describe the definable sets and functions of such structures, the following terminology is useful.
\begin{definition} Let $L \subseteq K$ be fields.
An $(L,K)$-linear polynomial is a polynomial of the form
\[ a_1x_1 + \ldots + a_nx_n +b,
\qquad\text{with } a_i \in L \text{ and } b \in K.\]
If $\Lm = \Laff^L$, we write $\text{Poly}_k(\Lm,K)$ for the set of all $(L,K)$-linear polynomials in $k$ variables.
\end{definition}
For all semi-affine structures $(K,L)$, we can deduce cell decomposition and quantifier elimination, using the method we described for the language $\Laff$. The general idea is this: the cell decomposition results from the previous section still hold if we consider variations of $\LaffinfQE$, where we have more (or less) symbols for scalar multiplication to the language of the field sort. Every semi-affine language can be obtained by collapsing such a language to a language having only the field sort. In each case, we obtain cell decomposition using cells as in \eqref{eq:collapsedcell}, where the only difference is that for $(K,\Laff^L)$, the functions $a_i(x)$ and $c(x)$ will now be $(L,K)$-linear polynomials. (Assuming that scalar multiplication is only definable for constants from $L$.) From this, the following description of definable cells can easily be deduced:
\begin{lemma}
The definable sets of a semi-affine structure $(K, \Lm)$ are the boolean combinations of sets of the forms
\[\{ x\in K^k \ | \ \ord f_1(x) \ \square\ \ord \pi^r f_2(x) \} \mathand \{x\in K^k \ | \ \rho_{n,m}(f_3(x)) = \lambda \},\]
where the  $f_i \in \text{Poly}_k(\Lm,K)$, $r\in \Z$ and $\lambda \in \Lambda_{n,m}$. 
\end{lemma}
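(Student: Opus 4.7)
The plan is to invoke the cell decomposition and $K$-quantifier elimination for $\Laff^L$ described just before the lemma, and then verify that every atomic formula of $\Laff^L$ already defines a set of one of the two stated forms, up to boolean combinations. Since the class of boolean combinations of the stated basic sets is (by definition) closed under boolean operations, it suffices to handle the atomic level.

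First, by collapsing $\LaffinfQE$ to the mono-sorted setting as in the preceding paragraphs (replacing $\Lambda_{n,m}$ by a fixed set of representatives in $R_K$ and expanding every $\Lambda_{n,m}$-quantifier into a finite disjunction or conjunction), every $\Lm$-definable subset of $K^k$ is defined by a quantifier-free $\Laff^L$-formula. Terms of $\Laff^L$ are exactly elements of $\text{Poly}_k(\Lm, K)$, so each atomic formula has one of three shapes: $f_1(x) = f_2(x)$, $f_1(x) \mid f_2(x)$, or $R_{n,m}(f_1(x), f_2(x), f_3(x))$, with $f_i \in \text{Poly}_k(\Lm,K)$.

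Next, I would dispatch these one by one. The divisibility atom $f_1 \mid f_2$ unfolds as $\ord f_1(x) \leqslant \ord f_2(x)$, which is equivalent to $\ord f_1(x) < \ord \pi f_2(x)$: a set of the first form with $r = 1$ and $\square$ being $<$. The ternary atom $R_{n,m}(f_1, f_2, f_3)$ unfolds as $\rho_{n,m}(f_2(x) - f_1(x)) = \rho_{n,m}(f_3(x))$, and since $\Lambda_{n,m}$ is finite this is equivalent to the disjunction
\[\bigvee_{\lambda \in \Lambda_{n,m}} \bigl(\rho_{n,m}(f_2 - f_1) = \lambda \ \wedge \ \rho_{n,m}(f_3) = \lambda\bigr),\]
which is a boolean combination of sets of the second form. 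Finally, equality $f_1 = f_2$ amounts to $\rho_{n,m}(f_1 - f_2) = 0$ (the zero class is a single point of $\Lambda_{n,m}$), again of the second form. Collecting these reductions, every quantifier-free atomic condition lies in the boolean algebra generated by the two stated families, and the lemma follows.

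The main obstacle is really to have the quantifier elimination in the collapsed language at our disposal, which in turn rests on the finiteness of $\Lambda_{n,m}$ — the same finiteness that lets the $R_{n,m}$-atoms be unfolded as finite disjunctions of conditions of the second form. Everything else in the argument is straightforward syntactic bookkeeping on atomic formulas.
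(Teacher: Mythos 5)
Your argument is correct and matches the paper's intended derivation: the paper states this lemma as an immediate consequence of the collapsed cell decomposition/quantifier elimination for $\Laff^L$, whose cells are cut out by exactly the two kinds of conditions you isolate at the atomic level. Your bookkeeping on the atoms (divisibility as $\ord f_1 < \ord \pi f_2$, equality as $\rho_{n,m}(f_1-f_2)=0$, and $R_{n,m}$ unfolded over the finite set $\Lambda_{n,m}$) is accurate, so nothing further is needed.
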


\noindent In the next section we study the definable functions for these languages.

\subsection{Definable functions and Skolem functions} \label{subsec:skol}

The definable functions of a semi-affine structure $(K, \Lm)$ will be called $\Lm$-semiaffine functions over $K$. The definable sets and functions of $(\Q_p, \Laff^{\Q_p})$ will be referred to as being `semi-linear'.
 Using cell decomposition, it is easy to see that semi-affine functions
actually have a very simple form.

\begin{lemma} 
Let $(K, \Lm)$ be a semi-affine structure. For any $\Lm$-semiaffine function $A \subseteq K^k \to K^l$ there exists a finite partition of $A$ in $\Lm$-definable sets $B_i$, such that $f_{|B_i}$ has the form
\[f_{|B_i}: B_i \to K^l : x\mapsto (f_1(x), \ldots, f_l(x)),\]
with $f_l(x) \in \text{Poly}_k(\Lm,K)$.
\end{lemma}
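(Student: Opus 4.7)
The plan is to mirror the proof of the corresponding statement for $\Laff^{\pi}$ given at the end of Subsection \ref{subsec:defsetfun}, carried out in the mono-sorted semi-affine setting. First I would reduce to the case $l=1$: writing $f=(f^{(1)},\ldots,f^{(l)})$, each coordinate $f^{(j)}\colon A \to K$ is $\Lm$-definable, so if each admits a partition of $A$ on which it is given by a linear polynomial, then a common refinement of these $l$ partitions (which is again a partition into $\Lm$-definable sets) yields a single partition on which every component is simultaneously a member of $\text{Poly}_k(\Lm,K)$.

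For the case $l=1$, the graph $\mathrm{Graph}(f)\subseteq K^{k+1}$ is $\Lm$-definable, so by the cell decomposition for semi-affine structures (the form \eqref{eq:collapsedcell}, derived from Section \ref{subsec:celdec} via collapsing the multi-sorted language), it partitions as a finite disjoint union of cells
\[C_i = \{(x,t)\in D_i\times K \mid \ord a_{i,1}(x)\,\square_1\,\ord(t-c_i(x))\,\square_2\,\ord a_{i,2}(x) \wedge \rho_{n,m}(t-c_i(x))=\lambda_i\},\]
with each $c_i \in \text{Poly}_k(\Lm,K)$ and $\lambda_i \in \Lambda_{n,m}$. I set $B_i$ to be the projection of $C_i$ onto $K^k$; these are $\Lm$-definable, cover $A$, and (after refinement) can be taken disjoint.

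The key step is then the uniqueness argument: since $f$ is a function, for every $x\in B_i$ there is exactly one $t\in K$ with $(x,t)\in C_i$. I claim this forces $\lambda_i=0$, and hence $t=c_i(x)$ (using that $\rho_{n,m}(y)=0$ iff $y=0$). Indeed, suppose $\lambda_i\neq 0$. Then the condition $\rho_{n,m}(t-c_i(x))=\lambda_i$ fixes only the residue of $\ord(t-c_i(x))$ modulo $n$ together with the angular component $\acm(t-c_i(x))$, leaving the ``ball'' of candidates $\{y\in K : \ord y = \gamma,\ \acm(y)=\alpha\}$ for each admissible $\gamma$, which is a coset of $\pi^{\gamma+m}R_K$ and hence infinite. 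This produces many valid values of $t$ within any nonempty valuation range permitted by $\square_1,\square_2$, contradicting uniqueness. Therefore $\lambda_i=0$, so $f|_{B_i}(x)=c_i(x)\in\text{Poly}_k(\Lm,K)$, as required.

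The main (and essentially only) subtle point is the uniqueness argument driving $\lambda_i=0$: one needs to observe that finiteness of $\Lambda_{n,m}$ together with infiniteness of every non-empty ``annulus'' $\{y:\ord y=\gamma,\ \acm(y)=\alpha\}$ in $K$ prevents the residue and valuation conditions from isolating a single $t$ unless that $t$ equals the center. Everything else is just bookkeeping — invoking the already-established cell decomposition and taking common refinements across coordinates.
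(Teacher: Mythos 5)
Your proposal is correct and follows essentially the same route as the paper, which proves the analogous statement for $\Laffinf$ in Subsection \ref{subsec:defsetfun} by decomposing the graph into cells and invoking uniqueness of $t$ to force $t=c(x)$; your added argument that $\lambda_i\neq 0$ would leave infinitely many admissible $t$ (hence $\lambda_i=0$ and $t=c_i(x)$) correctly fills in the detail the paper leaves implicit. The reduction to $l=1$ by common refinement is the expected bookkeeping and is fine.
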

All of these semi-affine structures are truly linear in the sense that there does not exist any open set where multiplication is definable.

\begin{corol} Let $K$ be any $(\F_q, \Z)$-field and $\Lm$ a semi-affine language. 
Let $U \subseteq K^2$ be an open semi-affine set. The map $f:U \to K:(x,y) \mapsto xy$ is not a semi-affine function.
\end{corol}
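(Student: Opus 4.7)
The plan is to derive a contradiction by exploiting the very rigid form of semi-affine functions given in the preceding lemma. Suppose that $f:U\to K$, $(x,y)\mapsto xy$, were $\Lm$-semi-affine on the open set $U$. Then the lemma would produce a finite partition $U = \bigsqcup_{i=1}^N B_i$ into $\Lm$-definable pieces such that on each $B_i$ the map $f$ agrees with some $(L,K)$-linear polynomial $g_i(x,y) = a_i x + b_i y + c_i$, where $a_i,b_i\in L$ and $c_i\in K$. Consequently each piece $B_i$ is forced to lie inside the affine conic
\[Z_i := \{(x,y)\in K^2 \mid xy = a_i x + b_i y + c_i\}.\]

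The key observation I would use is that each $Z_i$ has very thin vertical fibers: for any $x_0\neq b_i$ the equation determines $y$ uniquely as $y=(a_ix_0+c_i)/(x_0-b_i)$, so every vertical line $\{x_0\}\times K$ with $x_0\neq b_i$ meets $Z_i$ in at most one point. On the exceptional line $x=b_i$ the set $Z_i$ is either empty or the whole line depending on whether $a_ib_i+c_i=0$, but only finitely many such exceptional lines occur among the $Z_1,\dots,Z_N$.

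To finish I would choose a basic open box $V_1\times V_2\subseteq U$ in the valuation topology (possible because $U$ is nonempty and open) and pick $x_0\in V_1\setminus\{b_1,\dots,b_N\}$, which is possible because $V_1$ contains a ball in the infinite field $K$. For this $x_0$ the set $\{y\in V_2 \mid (x_0,y)\in\bigcup_i Z_i\}$ contains at most $N$ points, whereas $V_2$ is infinite, so there exists $y_0\in V_2$ with $(x_0,y_0)\notin\bigcup_i Z_i$. But $\bigcup_i Z_i \supseteq \bigsqcup_i B_i = U$, so this contradicts $(x_0,y_0)\in V_1\times V_2\subseteq U$. The only nontrivial input is the preceding lemma characterizing $\Lm$-semi-affine functions as piecewise $(L,K)$-linear; everything else is elementary fiber-counting on the conic $xy-ax-by-c=0$, so in particular no dimension theory, Baire category argument, or completeness assumption on $K$ is required.
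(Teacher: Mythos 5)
Your proof is correct, and it takes a genuinely different route from the paper's. The paper argues by slicing: it fixes a point $(x_0,y_0)$ in an open cell where multiplication is assumed definable, passes to the line $y=\alpha x$ through that point (which forces it to first enlarge the language to $\Laff^K$ so that multiplication by $\alpha$ is definable), observes that the restriction induces a definable one-variable function $x\mapsto \alpha x^2$ on an infinite set, and then derives a contradiction because a single $(L,K)$-linear polynomial $b_1x+b_2$ cannot agree with $\alpha x^2$ at infinitely many points. Your argument instead works directly in two variables: you compare the graph of $xy$ with the graphs of the finitely many linear pieces $g_i$ and count points in vertical fibers of the conics $Z_i$. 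This buys you two things. First, you never need to enlarge the language or invoke definability of the auxiliary line, so the reduction ``assume scalar multiplication by all $c\in K$ is definable'' disappears; the only input is the piecewise-$(L,K)$-linear form of definable functions, which holds verbatim for every semi-affine structure. Second, you sidestep a small degeneracy the paper's argument has to worry about (the case $\alpha=0$, i.e.\ $y_0=0$, where $b_1x+b_2\equiv \alpha x^2$ is no longer absurd), since your fiber count is uniform once $x_0$ avoids the finitely many exceptional abscissae $b_1,\dots,b_N$. The only facts you use beyond the lemma are that a nonempty open set in $K^2$ contains a product of balls and that balls in an $(\F_q,\Z)$-field are infinite, both of which are immediate. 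In short: same key lemma, but a more elementary and slightly more robust deduction from it.
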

\begin{proof}
Let us assume that scalar multiplication is definable for all $c \in K$, and that 
multiplication is definable on an open cell $C$. 
Fix a point $(x_0,y_0) \in C$. It is easy to see that if we choose $k \in \N$ big enough, we have that 
\begin{equation}\label{eq:nomult} \{ (x,y) \in K^2 \mid x \in x_0 + \pi^kR_K, y\in  y_0 + \pi^kR_k\} \subset C.\end{equation} If $\ord x_0 \leqslant \ord y_0$, there exists $\alpha \in R_K$ such that $y_0 = \alpha x_0$. Moreover, because of \eqref{eq:nomult}, the intersection \[W := C\cap \{(x,y) \in K^2 \mid y = \alpha x\}\] is an infinite set, and  the projection $\pi_x(W)$  onto the first coordinate also contains infinitely many points.
Note that since $xy = \alpha x^2$ for $(x,y) \in W$, the multiplication map on $W$ induces a definable function $\pi_x(W) \to K: x \mapsto \alpha x^2$.
\\
After some (finite) further partitioning, we can find an open subset $U \subseteq \pi_x(W)$ and constants $b_1, b_2$ such that on $U$, the function $f(x) = b_1 x + b_2$ defines the map $x \mapsto \alpha x^2$. But this implies that the equation $b_1x + b_2 = \alpha x^2$ has infinitely many solutions, which is a contradiction.  If $\ord x_0 > \ord y_0$, we can give a similar argument by intersecting with the set $\{x =\frac1 \alpha y\}$.
\end{proof}

\noindent A question one can pose concerning semi-affine functions is whether it is always possible to find a definable Skolem
function, i.e. a definable choice in the fibers of $f$. As is the case for semi-algebraic functions (see \cite{scow-vdd88}), the answer is certainly `yes' for semilinear functions, and more generally, for functions definable in a structure $(K,\Laff^K)$.

\begin{theorem}
Let $X \subseteq K^{k + r}$ be an $\Laff^K$-definable set. \\If $\pi_k(X) \subseteq K^k$  
is the projection on the first $k$ variables,  there exists a semilinear function $g: \pi_k(X) \to X$ 
such that $\pi_k \circ g = \mathrm{Id}_{\pi_k(X)}$.
\end{theorem}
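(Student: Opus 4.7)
The plan is to induct on $r$, reducing to the single-variable case: if one can always produce a definable section for a projection $K^{k+1}\to K^k$, then iterating $r$ times gives the desired section for $\pi_k : K^{k+r}\to K^k$. So I focus on the case $r=1$.

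Given $\Laff^K$-definable $X \subseteq K^{k+1}$, I apply the cell decomposition theorem of the previous section in its $\Laff^K$ variant to write $X$ as a finite disjoint union of cells
\[C_i = \{(x,t)\in D_i\times K : \ord a_{i,1}(x)\,\square_{i,1}\,\ord(t-c_i(x))\,\square_{i,2}\,\ord a_{i,2}(x),\ \rho_{n,m}(t-c_i(x))=\lambda_i\},\]
with the $a_{i,j}, c_i$ being $(K,K)$-linear and $\lambda_i \in \Lambda_{n,m}$. Fix an arbitrary ordering of the cells and set $P_i := \pi_k(C_i)\setminus\bigcup_{j<i}\pi_k(C_j)$; each $P_i$ is $\Laff^K$-definable, and the $P_i$ form a finite partition of $\pi_k(X)$. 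It then suffices to produce a definable section $g_i : P_i \to C_i$ on each piece and glue.

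The heart of the argument is the construction of each $g_i$ as a $(K,K)$-linear polynomial in $x$. If $\lambda_i = 0$, then $t = c_i(x)$ is forced and we take $g_i(x) = c_i(x)$. If $\lambda_i \neq 0$ and both $\square_{i,j}$ impose no order condition, then $g_i(x) = c_i(x) + \lambda_i$ works, since the fixed representative $\lambda_i \in K$ satisfies $\rho_{n,m}(\lambda_i) = \lambda_i$. Otherwise, assume $\square_{i,1}$ is $<$: I further partition $P_i$ into finitely many $\Laff^K$-definable pieces (using that $\Lambda_{n,m}$ is finite in the finite residue field setting) according to the value $\mu := \rho_{n,m}(a_{i,1}(x))$. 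On each such piece, $\ord a_{i,1}(x) \bmod n$ and $\acm(a_{i,1}(x))$ are constants, so I can fix once and for all an integer $s_0 \in \{1,\ldots,n\}$ with $s_0 \equiv \ord\lambda_i - \ord\mu \pmod n$ together with a constant $u \in K$ satisfying $\ord u = s_0$ and $\acm(u)\acm(\mu) = \acm(\lambda_i)$, and set $g_i(x) := c_i(x) + u\cdot a_{i,1}(x)$. A direct check gives $\ord(g_i(x)-c_i(x)) = \ord a_{i,1}(x) + s_0 > \ord a_{i,1}(x)$ and $\rho_{n,m}(g_i(x)-c_i(x)) = \lambda_i$, and the upper order bound is automatic on $P_i$, since $P_i$ is exactly the locus where the fiber is nonempty and our order is the minimal admissible one. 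The case where $\square_{i,1}$ imposes no condition and $\square_{i,2}$ is $<$ is treated symmetrically, using $a_{i,2}$ and a negative $s_0$.

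The main obstacle, in my view, is not the bookkeeping above but isolating precisely what feature of $\Laff^K$ makes everything go through. It is the availability of scalar multiplication by \emph{every} $c \in K$ that allows the constant $u$ to realize the prescribed angular component $\acm(u) = \acm(\lambda_i)\acm(\mu)^{-1}$; over a semi-affine structure $\Laff^L$ with $L \subsetneq K$ the realizable angular components form only a proper subgroup of $(R_K/\pi^m R_K)^\times$, so the construction can break down and definable Skolem functions need not exist, which accords with the remark in the previous subsection that scalar multiplication and definable Skolem functions are tightly linked.
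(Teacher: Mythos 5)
Your proof is correct and follows essentially the same route as the paper: reduce via cell decomposition to a single cell, dispose of the trivial cases ($\lambda=0$, no order conditions), then partition by $\mu=\rho_{n,m}$ of the bounding polynomial and set $t(x)=c(x)+u\cdot a(x)$ for a constant $u$ with prescribed order and angular component --- your $u$ is exactly the paper's $\frac{\lambda}{\mu}$ (resp.\ $\frac{\lambda}{\pi^n\mu}$, etc.), and your ``minimal admissible order plus nonempty fiber'' justification is the paper's remark that some $\ord\lambda+kn$ lies strictly between the two bounds. The only cosmetic differences are that you anchor to the lower bound $a_{i,1}$ where the paper anchors to the upper bound $a(x)$, and that you spell out the induction on $r$ and the disjointification of the cell projections, which the paper leaves implicit.
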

\begin{proof}
If suffices to check that 
given a $C$ and the projection map $\pi_x$,
\[\pi_x: C \subset K^{l+1} \to K^l: (x_1,\ldots, x_n,t) \mapsto (x_1, \ldots,
x_n),\] there exists a definable function $g: \pi_x(C) \to C$ such
that $\pi_x \circ g = \mathrm{Id}_{\pi_x(C)}$.
\\
If the cell $C$ has a center $c(x) \neq 0$, we first apply a
translation
\[C\to C': (x,t) \mapsto (x,t-c(x)),\]
to a cell $C'$ with center $c'(x)=0$. Since this translation is
bijective, it is invertible. Therefore the problem is reduced to the following. 
Let $C$ be a cell of the form
\[C = \{(x,t) \in D \times K \ | \ \ord b(x)\,\square_1 \,\ord
t \,\square_2 \,\ord a(x)\ \wedge \ \rho_{n,m}(t) = \lambda \},
\] where $a(x), b(x) \in K[x]$  and
$D \subseteq K^l$ is a definable set. 
We must show that there exists a definable function $g: \pi_x(C) \to C$ such
that $\pi_x \circ g = \mathrm{Id}_{\pi_x(C)}$. \\\\
Given $x \in \pi_x(C)\subseteq D$, we have to find $t(x)$ such that
$(x,t(x))$ satisfies the conditions
\begin{eqnarray}
 \ord b(x)\ \square_1 \ \ord
t(x) \ \square_2 \ \ord a(x) \label{c1}\\
\rho_{n,m}(t(x)) =  \lambda \label{c2}
\end{eqnarray}
If $\lambda = 0$, put $g(x) = (x,0)$. From now on we assume that $\lambda \neq 0$.\\
If $\square_1 = \square_2 =$ `no condition', we can simply put $g(x)
=(x, \lambda).$\\
If $\square_2 = $ `$<$', we can define $g$ as follows. First partition
$\pi_x(C)$ in parts $D_{\mu}$, such that
\[ D_{\mu} = \{x\in \pi_x(C) \ | \ \rho_{n,m}(a(x)) = \mu \}.\]
(Note: if $\mu = 0$, we can reduce to the cases were $\square_2$ = `no condition'.)
Our strategy is based on the fact that for every $x \in D$, there
exists $k \in \Z$ such that $k$ satisfies
\[\ord b(x) \ \square_1 \ \ord \lambda + kn < \ord a(x). \]
Restricting to a set $D_{\mu}$, we construct an element $t(x)$
with order as close as possible to $\ord a(x)$. This ensures that
$t(x)$ satisfies (\ref{c1}).
The definiton of $g$ on $D_{\mu}$ will depend on
the respective orders of $\lambda$ and $\mu$.
\begin{itemize}
\item If $\ord \lambda < \ord \mu$, we can define $g_{|D_{\mu}}$ as
\(g_{|D_{\mu}}: D_{\mu} \to C: x \mapsto \left(x,\frac{\lambda}{\mu} a(x)\right).\)
This means that we put $t(x) = \frac{\lambda}{\mu} a(x)$. Clearly
 $\rho_{n,m}(t(x)) = \lambda$. Also, since $-n < \ord
(\frac{\lambda}{\mu}) < 0$, we have that $0<\ord
\frac{a(x)}{t(x)}<n$, and thus condition (\ref{c1}) must be
satisfied.
\item If $\ord \lambda \geqslant \ord \mu$, put
\(g_{D_{\mu}}: D_{\mu} \to C: x \mapsto \left(x,\frac{\lambda}{\pi^n\mu} a(x)\right).\)
\end{itemize}
If  $\square_1 =$ `$<$' and $\square_2 =$ `no condition', we choose
$t(x)$ with order as close as possible to $\ord b(x)$. More
specifically, if $\ord \lambda \leqslant \mu$, define $g$ as
\(g_{D_{\mu}}: D_{\mu} \to C: x \mapsto \left(x,\frac{\lambda \pi^n}{\mu}
b(x)\right)\), and if $\ord \lambda > \ord \mu,$ put
\(g_{D_{\mu}}: D_{\mu}\to C: x \mapsto \left(x,\frac{\lambda}{\mu} b(x)\right).\)
\end{proof}
\noindent One has to be more careful for  structures $(K,\Laff^L)$ where $L \neq K$: the following lemma gives an example of a semi-affine structure
that has no definable Skolem functions.

\begin{lemma}\label{lemma:skolemscalars}
Let $K$ be an $(\F_q, \Z)$- field (with $q = p^r$\hspace{-2pt}) such that $\mathrm{char}(K) = 0$, and suppose that $\ord \pi < \ord p$. Let $A$ be the set
 \[A:= \{(x,y) \in K^2 \ | \ \ord y = 1+\ord x\}.\]
 For the projection map $\pi_1: A \to K: (x,y)\mapsto x,$
there does not exist an $\Laff^{\Q}$-definable function $g: \pi_1(A) \to K^2$ such that $\pi_1 \circ g = \mathrm{Id}_{\pi_1(A)}$.
\end{lemma}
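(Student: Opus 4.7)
The plan is to invoke the preceding structural result on semi-affine functions to reduce $g$ to a finite family of affine maps with coefficients in $\Q$, and then to exploit the hypothesis $\ord \pi < \ord p$ to rule out such maps.

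Suppose, for contradiction, that a definable $g : \pi_1(A) \to K^2$ with the required property exists. By the preceding lemma on semi-affine functions, there is a finite partition of $\pi_1(A)$ into $\Laff^{\Q}$-definable sets $B_i$ on each of which both coordinates of $g$ are $(\Q,K)$-linear polynomials. Since $\pi_1 \circ g = \mathrm{Id}$, on every $B_i$ containing more than one point the first coordinate must be the identity, so $g$ restricts to
\[g(x) = (x,\, a_i x + b_i), \qquad a_i \in \Q,\ b_i \in K,\]
and the condition $g(x) \in A$ becomes $\ord(a_i x + b_i) = 1 + \ord x$ for every $x \in B_i$.

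The crucial input is that, since $\mathrm{char}(K) = 0$ and the residue characteristic is $p$, the valuation of every nonzero rational lies in $(\ord p)\Z$; the hypothesis $\ord p > 1 = \ord \pi$ therefore precludes $\ord a = 1$ for any $a \in \Q$. A short case analysis using the ultrametric equality, treating separately the cases $a_i = 0$, $b_i = 0$ with $a_i \neq 0$, and $a_i, b_i$ both nonzero (subdivided further according to whether $\ord(a_i x)$ is less than, equal to, or greater than $\ord b_i$), shows that the equation $\ord(a_i x + b_i) = 1 + \ord x$ either forces $\ord a_i = 1$, which is impossible, or pins $\ord x$ down to a single value depending only on $a_i$ and $b_i$.

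In either outcome each $B_i$ is contained in a single fibre $\{x \in K \mid \ord x = \gamma_i\}$ of the valuation. But the finite union $\bigcup_i B_i = \pi_1(A)$ contains $K^{\times}$, whose image under $\ord$ is all of $\Z$, so infinitely many distinct valuations are represented. This is the desired contradiction. The step that will require the most care is the degenerate subcase $\ord(a_i x) = \ord b_i$, where the ultrametric inequality only yields $\ord(a_i x + b_i) \geq \ord b_i$; one must check that imposing the additional equality $\ord(a_i x + b_i) = \ord x + 1$ still forces $\ord x$ to a single value, which ultimately again comes down to $\ord a_i$ being a multiple of $\ord p > 1$.
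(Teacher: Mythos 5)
Your proposal is correct and follows essentially the same route as the paper's proof: reduce $g$ on each piece of a finite partition to $x \mapsto (x, ax+b)$ with $a \in \Q$, observe that $\ord a \in (\ord p)\Z$ with $\ord p > 1 = \ord\pi$ rules out $\ord a = 1$, and conclude by a pigeonhole on the finitely many pieces versus the infinitely many values of $\ord x$. The paper argues more tersely by picking a piece containing $x$ with $\ord x < \ord(b/a)$, whereas you show each piece is confined to finitely many fibres of $\ord$; your version handles the degenerate cases ($a=0$, $\ord(ax)=\ord b$) more explicitly, but the content is the same.
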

\begin{proof}
Suppose such a $g$ exists.
After partitioning  $\pi_1(A)$ in cells $C$, the function $g$ must have the form
\[g_{|C}: C\to K^2 : x \mapsto (x, ax+b),\]
where $ax+b$ is a $(\Q, K)$-linear polynomial, and hence $a \in \Q$.
There must be at least one cell $C$ that contains elements $x$ for which $\ord x < \ord \frac{b}{a}$. For these elements, $\ord ax+b = \ord ax$. However, since $\ord p > \ord \pi=1$ and $\ord a \in (\ord p)\Z$, it is impossible that $\ord a =1$, which is a contradiction.
\end{proof}
In general, $(K, \Laff^L)$ will admit definable skolem functions if for any $n,m \in \N$ and for any coset $\lambda Q_{n,m}$, there exists a element $\lambda_0\in K$ with $\rho_{n,m}(\lambda_0) = \lambda$ and $0 \leqslant \ord \lambda_0<n$ such that scalar multiplication by $\lambda_0$ is definable. This condition is satisfied for $p$-adically closed fields if we require that $\overline{\Q}_K \subseteq L$, where $\overline{\Q}_K$ is the algebraic closure of $\Q$ in $K$.


\subsection{Classification} \label{subsec:clas}

Write $q_K$ for the cardinality of the residue field of $K$. Let $|\cdot|$ be the norm defined as $|x| = \max(|x_i|_K)$, where $|x_i|_K = q_K^{-\ord(x_i)}$. We can define a dimension invariant for semi-affine structures by using the notion of dimension that Scowcroft and van den Dries \cite{scow-vdd88} introduced for semi-algebraic sets, i.e., \emph{the dimension of a definable set $X$ is the greatest natural number $n$ such that there exists a non-empty definable subset $A \subseteq X$ and a definable bijection from $A$ to a nonempty definable open subset of $K^n$.}
It is straightforward, using cell decomposition and our characterization of definable functions, to check that this notion of dimension has the expected properties when applied to the context of semi-affine sets.
\\\\
Cluckers \cite{clu-2000} showed that two infinite $p$-adic semi-algebraic sets are isomorphic (i.e. there exists a definable bijection) if and only if they have the same dimension. There exists no analogous result for the semi-affine case, however. We will illustrate this fact with some examples. Although most results presented below are true for all $(\F_q, \Z)$-fields, we will restrict our attention to $K=\Q_p$.
\begin{lemma}
There exists no semi-affine bijection between the sets
\[ A = \{ t\in \Q_p \ | \ord t <0 \}\qquad \text{ and}  \qquad B =\{ t \in \Q_p \ | \ord t >0\}.\]
\end{lemma}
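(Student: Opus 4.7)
The plan is to invoke the characterization of semi-affine functions established just above this lemma: any $\Lm$-semi-affine function $f \colon A \to B$ admits a finite partition of $A$ into definable pieces $A_1, \ldots, A_N$ such that on each $A_i$, the function is given by an $(L,\Q_p)$-linear polynomial
\[
f|_{A_i}(x) = a_i x + b_i, \qquad a_i \in L \subseteq \Q_p,\ b_i \in \Q_p.
\]
Suppose for contradiction that such an $f$ exists and is a bijection from $A$ onto $B$.

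The key observation is that $A$ is unbounded in the sense that it contains elements of arbitrarily negative order, while $B$ is bounded (every element has order $> 0$). Since the partition $A = A_1 \cup \cdots \cup A_N$ is finite, at least one piece, say $A_{i_0}$, must itself contain elements of arbitrarily negative order. I would then analyze the affine map $x \mapsto a_{i_0} x + b_{i_0}$ on $A_{i_0}$.

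If $a_{i_0} \neq 0$, then for any $x \in A_{i_0}$ with $\ord x$ sufficiently small (concretely, $\ord x < \ord b_{i_0} - \ord a_{i_0}$, with the usual convention $\ord 0 = +\infty$), the ultrametric triangle inequality gives the equality $\ord(a_{i_0} x + b_{i_0}) = \ord a_{i_0} + \ord x$, which can be made arbitrarily negative. This contradicts $f(x) \in B$, since membership in $B$ requires $\ord f(x) > 0$. Hence we must have $a_{i_0} = 0$, so $f$ is constant on $A_{i_0}$. But $A_{i_0}$ is infinite, contradicting the injectivity of $f$.

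There is really no serious obstacle: the argument reduces to the elementary fact that a nonconstant affine map on a $p$-adic field sends sets of unbounded (negatively unbounded) order to sets of unbounded order, hence cannot map the unbounded set $A$ into the bounded set $B$, while constant maps fail injectivity. The only step that requires care is checking that the affine-on-pieces description genuinely applies regardless of the choice of $L$, which it does since $L$ is always a subfield of $\Q_p$ so that $\ord(a_{i_0} x) = \ord a_{i_0} + \ord x$ whenever $a_{i_0} \neq 0$.
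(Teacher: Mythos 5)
Your proof is correct and follows essentially the same route as the paper: pick a piece of the finite affine partition containing elements of arbitrarily negative order and observe that $\ord(ax+b)=\ord a+\ord x$ becomes negative there, contradicting $f(x)\in B$. You are in fact slightly more complete than the paper, which implicitly assumes $a\neq 0$, whereas you dispose of the constant case via injectivity.
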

\begin{proof}
Suppose such a bijection $f: A \to B$ exists. Then there must
exist a finite partition of $A$ in sets $A_i$ such that $f$ is
linear on each $A_i$. Since this partition is finite, at least one
of these sets $A_i$ must contain a subset of the form \[C_i=\{t\in \Q_p \ |\ \ord t <
-k \wedge \rho_{n,m}(t) = \lambda\},\] with $k \in \N$. 
By our assumption, there
must exist $a\in\Q$ and $b \in \Q_p$ such that on $A_i$, the map
$f_{|A_i}$ has the form $f_{|A_i}: x \mapsto ax+b$. If $f$ is
indeed a bijection, then $f(C_i)$ must be a subset of $B$, and
thus the condition $\ord f(x)
>0$ has to hold for all $x \in C_i$. However,
 it is possible to take $x\in C_i$ such that $\ord x <
\min\{\ord\hspace{-3pt}\left(\frac{b}{a}\right),
\ord\hspace{-3pt}\left(\frac1a\right)\}$. But then $\ord f(x) =
\ord (ax+b)<0$.
\end{proof}

\noindent Other examples of non-isomorphic sets of the same dimension are the sets $P_n$. 
To obtain this result, we will first look at the sets $Q_{n,m}$.
For most pairs $(n,m)$, the sets $Q_{n,m}$ are
essentially different. More precisely, there exists an isomorphism
between $Q_{n,m}$ and $Q_{n',m'}$ if and only if $n' = np^{m-m'}$.
To prove this, we first need the following lemma. (Note: We use
the notation $A\sqcup B$ to denote the disjoint union of two sets
$A$ and $B$. In practice this can be defined as $\{0\} \times A \cup \{1\} \times B$.)

\begin{lemma} \label{lemma:nobijection}
There exists no semi-affine bijection between
\[ \bigsqcup_{i\in I_1} Q_{n,m}
\qquad \text{and} \qquad  \bigsqcup_{i \in I_2} Q_{n,m} \] if
$I_1$ and $I_2$ are index sets with different cardinalities.
\end{lemma}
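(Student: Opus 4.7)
The plan is to suppose for contradiction that a semi-affine bijection $f:\bigsqcup_{i\in I_1}Q_{n,m}\to\bigsqcup_{i\in I_2}Q_{n,m}$ exists with $|I_1|\neq|I_2|$. Realize both disjoint unions as subsets $I_j\times Q_{n,m}\subseteq K^2$ with $I_j\subseteq K$ a finite set. The strategy is to extract from $f$ a surjection $I_1\to I_2$ by analyzing its action on the ``small tails'' $\{i\}\times\{t\in Q_{n,m}\mid \ord t<-N\}$ for $N$ sufficiently large; this yields $|I_1|\geqslant|I_2|$, and the symmetric argument applied to $f^{-1}$ (itself semi-affine, as the inverse of a definable bijection is definable) then gives the matching inequality, contradicting $|I_1|\neq|I_2|$.

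By the characterization of semi-affine functions combined with cell decomposition, partition the source into finitely many cells on each of which $f$ takes the form $(i,t)\mapsto(j_s,\,a_s t+b_s)$ for constants $j_s\in I_2$, $a_s\in L$, $b_s\in K$: the first coordinate must be constant on each cell since its image lies in the finite discrete set $I_2$. For each $i\in I_1$ I claim there is a unique cell $A_i^-$ whose slice at $x=i$ contains all elements $t\in Q_{n,m}$ with $\ord t$ sufficiently small. Indeed, such a slice has the form $\{t\mid \ord a_1(i)\,\square_1\,\ord(t-c(i))\,\square_2\,\ord a_2(i)\ \wedge\ \rho_{n,m}(t-c(i))=\lambda\}$, and containing elements of arbitrarily small order forces $\square_1$ to be ``no condition'' (otherwise $\ord(t-c(i))$ is bounded below by $\ord a_1(i)$) and forces $\lambda=1$ (since for $\ord t\ll \ord c(i)-m$ we have $\rho_{n,m}(t-c(i))=\rho_{n,m}(t)=1$, as $t\in Q_{n,m}$); any two cells with these properties would overlap at sufficiently negative $\ord t$, so disjointness of the partition forces uniqueness.

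On $A_i^-$, write $f(i,t)=(j_i^-,\,a_i^- t+b_i^-)$. A short ultrametric calculation for $\ord t$ sufficiently negative gives $\ord(a_i^- t+b_i^-)=\ord a_i^-+\ord t$ and $\acm(a_i^- t+b_i^-)=\acm(a_i^-)$; for the image to lie in $Q_{n,m}$ these conditions force $a_i^-\in Q_{n,m}$ (and in particular $a_i^-\neq 0$), and one verifies that $f$ maps the small tail of source copy $i$ onto the small tail of target copy $j_i^-$. Conversely, given any $(j,y)$ in the target with $\ord y$ very negative, a preimage $(i,t)$ lying on some cell with $a_s t+b_s=y$ must have $\ord t$ very negative (by ultrametric, since $b_s$ is fixed), so $(i,t)$ lies in the small-tail cell $A_i^-$ and $j=j_i^-$. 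Surjectivity of $f$ therefore forces $\{j_i^-\mid i\in I_1\}=I_2$, yielding $|I_1|\geqslant|I_2|$. I expect the main technical hurdle to be establishing uniqueness of $A_i^-$ and carrying out the asymptotic ultrametric analysis rigorously; once these are in place, the counting conclusion is immediate.
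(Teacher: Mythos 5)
Your overall strategy --- analyzing how $f$ acts on the deep tails $\{t\in Q_{n,m}\mid \ord t<N\}$ and extracting a counting conclusion --- is the right one, and your ultrametric analysis of an affine map $t\mapsto at+b$ on such a tail is correct. The genuine gap is the claim that for each $i$ there is a \emph{unique} cell $A_i^-$ whose slice at $x=i$ contains \emph{all} $t\in Q_{n,m}$ of sufficiently negative order. You write the cell condition as $\rho_{n,m}(t-c(i))=\lambda$ with the \emph{same} pair $(n,m)$ that defines $Q_{n,m}$, but the cells of a partition fine enough to make $f$ affine will in general carry conditions $\rho_{n',m'}(t-c(i))=\lambda$ for parameters $(n',m')$ dictated by $f$, typically with $n\mid n'$ and $m'\geqslant m$. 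Since $Q_{n,m}$ is then a disjoint union of several cosets of $Q_{n',m'}$, the deep tail of copy $i$ is necessarily spread over several pairwise disjoint cells, one per coset, and your uniqueness argument (``two such cells would overlap at sufficiently negative order'') fails: cells with distinct $\lambda\in\Lambda_{n',m'}$ lying over $1\in\Lambda_{n,m}$ are disjoint yet each contains elements of arbitrarily negative order. Consequently $j_i^-$ is not well defined --- a single source copy can send its various deep cosets to \emph{different} target copies --- and your surjectivity argument only yields $\sum_i s_i\geqslant |I_2|$, where $s_i$ is the number of deep cells over copy $i$; together with its symmetric counterpart this no longer forces $|I_1|=|I_2|$. (This splitting is not a pathology: the bijections constructed in the corollaries following this lemma genuinely split $Q_{n,m}$ into cosets of a finer $Q_{nn',m}$ and move the pieces independently.)

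The missing ingredient is exactly the coset bookkeeping in the paper's proof: refine all cells to a common pair $(n',m')$, note that each copy of $Q_{n,m}$ contributes the same number $c$ of deep cosets of the form $\{t\mid \ord t<\ell\}\cap \lambda Q_{n',m'}$ (namely $c=[Q_{n,m}:Q_{n',m'}]$), verify --- as you essentially already did --- that an affine map $t\mapsto at+b$ carries such a deep coset bijectively onto another deep coset (of $a\lambda Q_{n',m'}$, at a shifted depth), and conclude that $f$ induces a bijection between $c\,|I_1|$ and $c\,|I_2|$ such cosets, whence $|I_1|=|I_2|$. Your local analysis supplies all the facts needed for this; what must change is that the counting has to be done at the level of cosets of the finer $Q_{n',m'}$, not at the level of copies of $Q_{n,m}$.
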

\begin{proof}
For $j \in I_1$, we denote the different copies of $Q_{n,m}$ by
$Q_{n,m}^{(j)}$. Suppose a semi-affine bijection
\[f:\bigsqcup_{i\in I_1} Q_{n,m}
\to  \bigsqcup_{i \in I_2} Q_{n,m}\] does exist. Then there must
exist a finite partition of the $Q_{n,m}^{(j)}$ in cells $C$ such
that $f_{|C}$ is linear. Since we take finite partitions, for each
$Q_{n,m}^{(j)}$, there must be at least one cell of the form $ \{
x \in \Q_p \ | \ \ord x < k , x \in \lambda_{ij}Q_{n_{ij},m_{ij}}
\}$. In fact, after a further finite partition, we may suppose
that $n_{ij}$ and $m_{ij}$ are equal for each cell, and thus that
all $x \in \bigsqcup_{i\in I_1} Q_{n,m}$ with order smaller than
some fixed integer $k$ belong to a set in the partition which has
the form
\[C_{k, \lambda}^{(j)}:= \{ x \in \Q_p \ | \ \ord x < k , x \in
\lambda Q_{n',m'} \}.\] Because of the previous lemma, we will
have to map the elements of these cells to the elements with very
small (negative) order of $\bigsqcup_{i \in I_2} Q_{n,m}$ to get a
bijection. \\
It is easy to see that if $k < \ord(\frac{b}{a})-m'$,  a
function $x \mapsto ax+b$ gives a bijection between $C_{k, \lambda}^{(j)}$ to $C_{k+\ord a,
a\lambda}^{(j')}$. 
\\
 If we choose $k \in \Z$ small enough, then every set $C_{k,\lambda}^{(j)} \subset \bigsqcup_{j \in I_1} Q_{n,m}^{(j)}$ is mapped to a set $C_{k',\lambda}^{(j')} \subset \bigsqcup_{j' \in I_2} Q_{n,m}^{(j')}$. Also, for small enough $k' \in \Z$, every set $C_{k',\lambda}^{(j')}$ is in the image of exactly one set $C_{k,\lambda}^{(j)}$. So if $f$ is the required bijection, then for a small enough value of $\ell$,  $\bigsqcup_{j \in I_1} Q_{n,m}^{(j)}$ and $\bigsqcup_{j' \in I_2} Q_{n,m}^{(j')}$ contain exactly the same number of sets of the form $\{ x \in \Q_p \ | \ \ord x < \ell ,\ x \in
\lambda Q_{n',m'} \}$, which is only possible if $I_1$ and $I_2$ have the same cardinality.
\end{proof}

\begin{corol}
There exists a semi-affine bijection between $Q_{n,m}$ and
$Q_{n',m'}$ if and only if $n'= np^{m-m'}$.
\end{corol}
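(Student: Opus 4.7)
The proof splits into the two implications. For the ``if'' direction I construct an explicit piecewise-linear bijection; for the ``only if'' direction I decompose both $Q_{n,m}$ and $Q_{n',m'}$ as finite disjoint unions of copies of a common refinement $Q_{n'',m''}$ and invoke Lemma \ref{lemma:nobijection}.

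For $(\Leftarrow)$, after swapping the roles of $(n,m)$ and $(n',m')$ if necessary, assume $m \geqslant m'$, so that $n' = np^{m-m'} \geqslant n$. Since $\Z$ is dense in $\Z_p$, choose coset representatives $u_0 = 1,\, u_1,\, \ldots,\, u_{p^{m-m'}-1} \in \Q \cap (1+p^{m'}\Z_p)$ for the $p^{m-m'}$ cosets of $1+p^m\Z_p$ inside $1+p^{m'}\Z_p$. By Lemma \ref{lemma:defvb}, partition $Q_{n,m}$ into the pieces $A_j := \{x \in Q_{n,m} \mid \ord x \equiv jn \mod n'\}$ for $j = 0,\, \ldots,\, p^{m-m'}-1$, and on $A_j$ apply the linear map $x \mapsto p^{-jn}u_j x$. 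Writing a generic element of $A_j$ as $x = p^{kn'+jn}(1 + p^m r)$, its image $p^{kn'}u_j(1+p^m r)$ lies in $Q_{n',m'}$, and as $(j,k)$ vary these images partition $Q_{n',m'}$ disjointly. Since each scalar $p^{-jn}u_j$ lies in $\Q$, this piecewise map is semi-affine.

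For $(\Rightarrow)$, set $n'' := \text{lcm}(n, n')$ and $m'' := \max(m, m')$. A direct count (using that there are $n''/n$ order classes modulo $n''$ and $p^{m''-m}$ angular-component classes modulo $p^{m''}$) shows that $Q_{n,m}$ decomposes as a disjoint union of exactly $a := (n''/n)\, p^{m''-m}$ cosets of $Q_{n'',m''}$. By density of $\Q$ in $\Q_p$ the coset representatives $\lambda_i$ can all be chosen in $\Q$, so multiplication by $\lambda_i^{-1}$ yields a semi-affine bijection $Q_{n,m} \cong \bigsqcup_{i=1}^{a} Q_{n'',m''}$. Similarly, $Q_{n',m'} \cong \bigsqcup_{i=1}^{b} Q_{n'',m''}$ with $b := (n''/n')\, p^{m''-m'}$. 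A hypothetical bijection $Q_{n,m} \cong Q_{n',m'}$ would then produce one between $\bigsqcup^a Q_{n'',m''}$ and $\bigsqcup^b Q_{n'',m''}$, and Lemma \ref{lemma:nobijection} forces $a = b$. Finally, the equation $(n''/n)\, p^{m''-m} = (n''/n')\, p^{m''-m'}$ simplifies to $n' = np^{m-m'}$.

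The main subtlety throughout is verifying that every scalar appearing in the piecewise-linear bijections really lies in the scalar field of the semi-affine language; in our setting this reduces to choosing all coset representatives in $\Q$, which is immediate from the density of $\Q$ in $\Q_p$. Once that is in hand, both directions are essentially a bookkeeping exercise in the block structure of the sets $Q_{n,m}$.
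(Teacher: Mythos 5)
Your proof is correct and follows essentially the same route as the paper: both directions rest on decomposing $Q_{n,m}$ and $Q_{n',m'}$ into cosets of a common refinement (you use $Q_{\mathrm{lcm}(n,n'),\max(m,m')}$, the paper uses $Q_{nn',\max(m,m')}$), counting those cosets and invoking Lemma \ref{lemma:nobijection} for the ``only if'' direction, and building the bijection piecewise by scalar multiplication with rational representatives for the ``if'' direction. The counts agree ($a=n'$, $b=np^{m-m'}$ when normalized), so no further changes are needed.
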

\begin{proof}
Suppose $m = \max\{m,m'\}$ and partition $Q_{n,m}$ and
$Q_{n',m'}$ as
\begin{eqnarray*}
Q_{n,m}= \bigcup_{\lambda \in I_1} \lambda Q_{nn',m} &\text{ \ \ and \ \ } &
Q_{n',m'} = \bigcup_{\lambda \in I_2} \lambda Q_{nn',m}.
\end{eqnarray*}
Here $I_1$ and $I_2$ are defined als follows:
\begin{eqnarray*}
I_1 &=& \{ 1,p^2,\ldots, p^{(n'-1)n}\},\\
I_2 &=&\{p^{rn'}(1+a_{m'}p^{m'}+ \ldots + a_{m-1}p^{m-1}) \ | \
0\leqslant r < n;\  0\leqslant a_i \leqslant p-1\}.
\end{eqnarray*}
 If there exists a semi-affine bijection between $Q_{n,m}$ and
$Q_{n',m'}$, this induces a bijection
\[\bigsqcup_{i\in I_1} Q_{nn',m}
\to  \bigsqcup_{i \in I_2} Q_{nn',m}.\] 
But since $\# I_1= n'$ and $\#I_2 = np^{m-m'}$, this contradicts Lemma \ref{lemma:nobijection} if $n'\neq np^{m-m'}$.\\
If the cardinalities of $I_1$ and $I_2$ are equal, let  $\tau$ be a bijection between $I_1$
and $I_2$. Now put
\[f_{|\lambda Q_{nn',m}}(x) = \frac{\tau(\lambda)}{\lambda}\, x.\] 
The function $f:Q_{n,m}\to Q_{n',m'}$ is the required bijection.
\end{proof}

\begin{corol} Let $n,n' >0$. \\There exists a semi-affine bijection between $P_n$ and $P_{n'}$ if and only if 
\[\frac{\#\Lambda_n}{\#\Lambda_{n'}} = \frac{n}{n'}\,p^{2\ord \left(\frac{n}{n'}\right)},\] 
where $\Lambda_n:= P_n \cap \{x\in \Q \ | 0<x \leqslant p^{2\ord n +1}-1\} $.\\
In particular, if $p \nmid n$ and $p\nmid n'$, there is no semi-affine bijection between $P_n$ and $P_{n'}$ if $n \neq n'$.
\end{corol}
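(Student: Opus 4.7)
The plan is to mimic the blueprint of Lemma \ref{lemma:nobijection}. Take $N=\mathrm{lcm}(n,n')$ and $M\geq\max\{2\ord n+1,\,2\ord n'+1\}$. Hensel's lemma applied to $X^n-u$ shows that a unit $u\in\Z_p^\times$ lies in $P_n$ iff its image in $(\Z_p/p^m)^\times$ is an $n$-th power, for any $m\geq 2\ord n+1$. Combined with the condition $n\mid\ord x$, this gives $P_n=\bigsqcup_{\lambda\in S_n}\lambda Q_{N,M}$ and analogously for $P_{n'}$, with $\#S_n=(N/n)\cdot N_n(M)$, where $N_n(M)$ denotes the number of $n$-th powers in $(\Z_p/p^M)^\times$. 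By Lemma \ref{lemma:nobijection}, a semi-affine bijection $P_n\to P_{n'}$ exists iff $\#S_n=\#S_{n'}$, i.e., iff $n'\,N_n(M)=n\,N_{n'}(M)$.

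To rewrite this as a condition on $\#\Lambda_n$, I would partition $\Lambda_n$ by the valuation $j=\ord x$ of its elements. Writing $x=p^ju$ with $u$ a positive integer coprime to $p$ and $u\leq p^{2\ord n+1-j}-1$, the condition $x\in P_n$ becomes $n\mid j$ together with $u$ being an $n$-th power mod $p^{2\ord n+1}$, so
\[\#\Lambda_n=\sum_{\substack{j\geq 0,\ n\mid j\\ j\leq 2\ord n}} N_n(2\ord n+1-j).\]
Whenever $n>2\ord n$, which holds in every case except $p=2$ with $n\in\{2,4\}$, only $j=0$ contributes and $\#\Lambda_n=N_n(2\ord n+1)$. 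Using the standard stability $N_n(M)=p^{M-2\ord n-1}N_n(2\ord n+1)$ for $M\geq 2\ord n+1$, the criterion $n'\,N_n(M)=n\,N_{n'}(M)$ rearranges at once to the displayed equality $\#\Lambda_n/\#\Lambda_{n'}=(n/n')\,p^{2\ord(n/n')}$.

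The hard part will be the finitely many edge cases $p=2$ with $n\in\{2,4\}$ (and the symmetric pairs for $n'$), where the sum for $\#\Lambda_n$ picks up the extra term $N_n(1)=1$ and the relation $\#\Lambda_n=N_n(2\ord n+1)$ fails. I would handle these by direct inspection: using $[\Z_2^\times:(\Z_2^\times)^n]=2^{\ord n+1}$ for even $n$, a short computation shows that the criterion $\#S_n=\#S_{n'}$ forces $n=n'$ in these small cases, and one verifies that the claimed formula produces exactly the same pairings.

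For the ``in particular'' statement, if $p\nmid n$ and $p\nmid n'$ then $\ord n=\ord n'=0$ and $\#\Lambda_n=(p-1)/\gcd(n,p-1)$, so the displayed equality reduces to $n\gcd(n,p-1)=n'\gcd(n',p-1)$. Writing $d=\gcd(n,p-1)$ and $n=de$ with $\gcd(e,(p-1)/d)=1$ (and similarly for $n'$), the equation $d^{2}e={d'}^{2}e'$, together with these coprimality conditions, forces $d=d'$ by a short $q$-adic valuation argument at any prime $q$ distinguishing $d$ from $d'$, and then $e=e'$, hence $n=n'$.
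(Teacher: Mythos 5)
Your proposal follows essentially the same route as the paper's proof: partition $P_n$ and $P_{n'}$ into cosets of a common subgroup $Q_{N,M}$, count those cosets, invoke Lemma~\ref{lemma:nobijection} (with scalar multiplication supplying the converse direction), and then specialize to $\ord n=\ord n'=0$ with the same $\gcd$ argument for the final claim. The one place you diverge --- relating $\#\Lambda_n$ to the residue count $N_n(2\ord n+1)$ and flagging the $p=2$, $n\in\{2,4\}$ cases where $\Lambda_n$ as literally defined contains elements of positive valuation (e.g.\ $4\in\Lambda_2$ over $\QQ_2$) so that the two quantities differ --- is a legitimate subtlety the paper silently elides, and your deferred finite check does come out consistent, so it does not change the method or the conclusion.
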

\begin{proof}
Take partitions $P_n = \bigcup_{\lambda \in \Lambda_n} \lambda Q_{n,2\ord n+1},$ (and similarly for $P_{n'}$), as explained before. Assume that $\ord n \geqslant \ord n'$. By a similar reasoning as in the proof of the previous corollary, a bijection between $P_n$ and $P_{n'}$ would induce a bijection \[\bigsqcup_{i\in I_n} Q_{nn',2 \ord n+1}
\to  \bigsqcup_{i \in I_{n'}} Q_{nn',2 \ord n +1},\]
with $\# I_n= n'\cdot \#\Lambda_n$ and $\#I_{n'} =  np^{2(\ord n - \ord n')}\cdot \#\Lambda_{n'}$. There exists a bijection if and only if $\# I_n = \# I_{n'}$.\\\\
Now assume that $\ord n = \ord n' =0$, and $n \geqslant n'$. There exists a bijection between $P_n$ and $P_{n'}$ if $\frac{n}{\#\Lambda_n} = \frac{n'}{\#\Lambda_{n'}}$. Under our assumptions, $\#\Lambda_n$ is equal to the  number of elements of $\F_p^{\times}$ that are $n$-th powers. Applying a result from elementary number theory, we get that  $\#\Lambda_n = \frac{p-1}{d}$, where $d = (p-1,n)$, and therefore $P_n$ will be isomorphic with $P_{n'}$ if and only if $nd = n'd'$. This is equivalent to $n\tilde{d} = n'\tilde{d'}$, with $\tilde{d} = \frac{d}{a}, \tilde{d'}=\frac{d'}{a}$, and $a = (d,d')$. As a consequence, $\tilde{d'} \mid n$. 
If $\tilde{d'} \neq 1$, there is $q >1$ such that $q | \tilde{d'}$. But then also $q \mid (p-1,n)$. This contradicts $(\tilde{d'},d)=1$, so 
we conclude that $\tilde{d'}=1$, and therefore $n\tilde{d} =n'$, which contradicts our assumption that $n \geqslant n'$, unless $\tilde{d}=1$ and $n=n'$.
\end{proof}
\subsection*{Acknowledgements}
The results presented in this paper were obtained as part of my PhD thesis. I would like to thank my supervisor, Raf Cluckers, for many stimulating conversations about this topic, and other members of the jury (in particular, Jan Denef, Angus Macintyre and Leonard Lipshitz) for useful comments. Many thanks also to the Math Department of K.U.Leuven, for providing financial support and a very stimulating working environment. I would also like to thank the referee.
%
%
\bibliographystyle{abbrv}

\bibliography{/Users/iblueberry/Documents/Bibliografie}
\end{document}